\documentclass{amsart}
\usepackage{amsmath,amssymb}
\usepackage{bm}
\usepackage{graphicx}
\usepackage{float}
\usepackage{ascmac}
\usepackage{fancybox}
\usepackage{tabularx}
\usepackage{multicol}

\newtheorem{dfn}{Definition}[section]
\newtheorem{thm}[dfn]{Theorem}
\newtheorem{prop}[dfn]{Proposition}
\newtheorem{lem}[dfn]{Lemma}

\newtheorem{remark}[dfn]{Remark}
\newtheorem{example}[dfn]{Example}

\makeatletter
\renewcommand{\p@enumi}{A.}
\makeatother

\numberwithin{equation}{section}

\title[Inverse problems for first-order hyperbolic equations]{Inverse problems for first-order hyperbolic equations with time-dependent coefficients}
\author{Giuseppe Floridia}
\address{Università Mediterranea di Reggio Calabria,
Department PAU
Via dell'Università 25  
89124 Reggio Calabria, Italy}
\email{floridia.giuseppe@icloud.com}

\author{Hiroshi Takase}
\address{Graduate School of Mathematical Sciences,
The University of Tokyo, 3-8-1 Komaba, Meguro-ku, Tokyo 153-8914, Japan}
\email{htakase@ms.u-tokyo.ac.jp}
\date{August 23, 2021.}
\keywords{Inverse problems, first-order hyperbolic equations, Carleman estimates, integral curves, characteristic curves}
\subjclass[2020]{35R30, 35R25, 35L04, 35F16, 35Q49}
\begin{document}
\begin{abstract}
We prove global Lipschitz stability for inverse source and coefficient problems for first-order linear hyperbolic equations, the coefficients of which depend on both space and time. We use a global Carleman estimate, and a crucial point, introduced in this paper, is the choice of the length of integral curves of a vector field generated by the principal part of the hyperbolic operator to construct a weight function for the Carleman estimate. These integral curves correspond to the characteristic curves in some cases.
\end{abstract}
\maketitle
\section{Introduction}\label{Introduction}
Let $d\in\mathbb{N}$, $\Omega\subset\mathbb{R}^d$ be a bounded domain with Lipschitz boundary $\partial\Omega$, $T>0$, and $Q:=\Omega\times(0,T)$. For $a,b\in\mathbb{R}^d$, we denote by $a\cdot b$ the inner product on $\mathbb{R}^d$. We define the first-order partial differential operator $P$ such that
\[Pu:=A^0(x,t)\partial_tu+A(x,t)\cdot\nabla u,\]
where $A^0\in C^1(\overline{Q})\cap L^\infty(\Omega\times(0,\infty))$ is a positive function, and $A=(A^1,\cdots,A^d)\in C^2(\overline{Q};\mathbb{R}^d)$ is a vector-valued function on $\overline{Q}$. In this paper, we obtain global Lipschitz stability results for three inverse problems for equations with the principal part of type $P$.

\subsubsection*{State of the art} The arguments of this paper are based on the Carleman estimates, which were introduced by Carleman in \cite{Carleman1939} to prove unique continuation properties for elliptic partial differential equations with not necessarily analytic coefficients, and the Bukhgeim--Klibanov method introduced in \cite{Bukhgeim1981}. The methodology using the Carleman estimates is widely applicable to not only inverse problems and unique continuation (e.g., \cite{Yamamoto2017}, \cite{Huang2020}, \cite{Isakov2017}, \cite{Katchalov2001}, \cite{Klibanov2013}, \cite{Lavrentiev1980}, and \cite{Yamamoto2009}), but also control theory (e.g., \cite{Cannarsa2016}, \cite{Chaves-Silva2014}, \cite{Fu2019}, \cite{Fursikov1996}, \cite{Lebeau1995}, and \cite{Porretta2012}) for various partial differential equations.

Now, we describe some results concerned with the operator $P$. For the radiative transport equation having the principal part of type
\[\partial_tu(x,v,t)+v\cdot \nabla u(x,v,t),\quad (x,v,t)\in \Omega\times \mathbb{S}^{d-1}\times(0,T),\]
where $\mathbb{S}^{d-1}:=\{v\in\mathbb{R}^d\mid |v|=1\}$ is a set of a velocity field, Klibanov and Pamyatnykh \cite{Klibanov2006} and \cite{Klibanov2008} proved the Carleman estimates and global uniqueness theorem for inverse coefficient problem of determining a zeroth-order coefficient. In \cite{Klibanov2006} and \cite{Klibanov2008}, the weight function for the Carleman estimate was independent of the principal parts:
\[\varphi(x,t)=|x-x_0|^2-\beta t^2,\]
where $x_0\in\mathbb{R}^d$ and $\beta>0$ were fixed. For the same weight function used for transport equations with space-dependent first-order coefficients, see also Gaitan and Ouzzane \cite{Gaitan2014}. Machida and Yamamoto \cite{Machida2014} and \cite{Machida2020} also proved global Lipschitz stability for inverse coefficient problems, where they took a linear function as the weight function for the Carleman estimate:
\[\varphi(x,t)=\gamma\cdot x-\beta t,\]
where $\gamma\in\mathbb{R}^d$ and $\beta>0$ were fixed. Recently, Lai and Li \cite{Lai2020} proved Lipschitz stability for inverse source and coefficient problems of determining a zeroth-order coefficient under the assumption that there existed a suitable weight function for the Carleman estimate.

For first-order hyperbolic operators of type $P$ with a variable principal part, G\"olgeleyen and Yamamoto \cite{Golgeleyen2016} proved Lipschitz stability and conditional H\"older stability for inverse source and inverse coefficient problems, where they assumed the existence of a suitable weight function $\varphi=\varphi(x,t)$ for the Carleman estimate satisfying
\[\min_{(x,t)\in\overline{Q}}P\varphi(x,t)>0\]
when $A^0\equiv1$ and $A=A(x)$. In the same time-independent case, Cannarsa, Floridia, G\"olgeleyen, and Yamamoto \cite{Cannarsa2019} proved local H\"older stability for inverse coefficient problems of determining the principal part and a zeroth-order coefficient, where they took a function
\[\varphi(x,t)=A(x)\cdot x-\beta t\]
as the weight function for the Carleman estimate, and determined the coefficients up to a local domain, depending on the weight function, from local boundary data. In the same time-independent case, we also mention that Gaitan and Ouzzane \cite{Gaitan2014} proved global Lipschitz stability for inverse coefficient problem of determining a zeroth-order coefficient via the Carleman estimate.

In these results mentioned above, in general, one must impose some assumptions on the principal parts and weight functions to guarantee the Carleman estimates that is not needed in this paper. Moreover, we must note that these results were all for first-order equations with coefficients independent of time $t$. However, equations with time-dependent principal parts of type $P$ often appear in mathematical physics, for example, the conservation law of mass in time-dependent velocity fields, and the mathematical analysis for such equations is needed (e.g., Taylor \cite[Section 17.1]{Taylor2011III} and Evans \cite[Section 11.1]{Evans2010}). In regard to first-order hyperbolic equations having time-dependent principal parts, although the theory about direct problems for the above equations is quite complete, there are some open questions for inverse problems due to the major difficulties in dealing with time-dependent coefficients. About inverse problems and time-dependent principal parts, we mention Cannarsa, Floridia, and Yamamoto \cite{Cannarsa2019a} that proved an observability inequality for a non-degenerate case. Floridia and Takase \cite{Floridia2020a} proved the observability inequality for a degenerate case, which was motivated by applications to inverse problems. In both papers, they dealt with the case $A^0\equiv1$ and $A=A(t)$. For more references regarding inverse problems and controllability for conservation laws with time-dependent coefficients, see \cite{Holden2014}, \cite{Kabanikhin2020}, \cite{Kang2005}, and \cite{Klibanov2007}. Regarding inverse problems for nonlinear first-order equations, readers are referred to Esteve and Zuazua \cite{Esteve2020}, which studies Hamilton--Jacobi equations (see also Porretta and Zuazua \cite{Porretta2012}).

For the second-order hyperbolic equations with time-dependent coefficients, the literature about inverse problems is more extensive. In this context, Jiang, Liu, and Yamamoto \cite{Liu2017}, and Yu, Liu, and Yamamoto \cite{Yu2018} proved the local H\"older stability for inverse source and coefficient problems in the Euclidean space assuming the Carleman estimates existed. Takase \cite{Takase2020a} proved local H\"older stability for the wave equation and obtained  some sufficient conditions for the Carleman estimate by using geometric analysis on Lorentzian manifolds.

Finally, we note that, on the well-posedness by the method of characteristics of first-order hyperbolic equations with principal parts of type $P$, readers are referred to John \cite[Chapter 1]{John1978}, Rauch \cite[Chapter 1]{Rauch2012}, Evans \cite[Chapter 3]{Evans2010}, and Bressan \cite{Bressan2000}. In addition to that, for symmetric hyperbolic systems, readers are referred to Rauch \cite[Chapter 2]{Rauch2012}, Ringstr\"om \cite[Chapter 7]{Ringstrom2009}, and Taylor \cite[Section 16.2]{Taylor2011III}.

\subsubsection*{Purpose of this paper} Although a large number of studies have been made on inverse problems for first-order equations, as already mentioned, what seems to be lacking is analysis for equations with time-dependent coefficients. In this paper we investigate equations with coefficients depending on both space and time. The important point we want to make is the decisive way to choose the weight function in the Carleman estimate for applications to inverse problems. Indeed, the weight function of our Carleman estimate (see Proposition \ref{Carleman} and Lemma \ref{lemma}) is linear in $t$, which is similar to Machida--Yamamoto \cite{Machida2014}, G\"olgeleyen--Yamamoto \cite{Golgeleyen2016}, and Cannarsa--Floridia--Yamamoto \cite{Cannarsa2019a}. However, the novelty is that the spatial term of the weight function in our Carleman estimate is the length of integral curves of the vector-valued function $A(\cdot,0)$, which is different from the ones in all the above results (\cite{Cannarsa2019a}, \cite{Floridia2020a}, \cite{Gaitan2014}, \cite{Golgeleyen2016}, \cite{Klibanov2006}, \cite{Klibanov2008}, and \cite{Machida2014}) and a new attempt. Owing to the choice, we need not assume any assumptions on $A$ to guarantee the Carleman estimates like in \cite{Golgeleyen2016} and \cite{Cannarsa2019}, but assume only the finiteness of the length of integral curves (see Definition \ref{dissipative} and \eqref{finiteness}). We remark that these integral curves correspond to the characteristic curves in the case $A^0\equiv 1$ and $A=A(x)$. In addition, we note that thanks to the above linearity with respect to $t$, we do not need to extend the solution to $(-T,0)$, which enables us to apply the Carleman estimate to inverse problems for wider functional space of time-dependent coefficients $A^0$ and $A$.

\subsubsection*{Structure of this paper} The main results in this paper are global Lipschitz stability for the inverse source problem (Theorem \ref{ISP}), inverse coefficient problem to determine the zeroth-order coefficient (Theorem \ref{ICP}), and inverse coefficient problem to determine the time-independent principal part (Theorem \ref{ICP2}). After describing some settings, we present them in section \ref{Preliminary}. In section \ref{Carleman_energy}, we establish the global Carleman estimate (Proposition \ref{Carleman}), which is the main tool to prove the main results, under the assumption that a suitable weight function exists. After that, we prove the existence of such a weight function by taking the length of integral curves generated by the vector-valued function $A(\cdot,0)$ (Lemma \ref{lemma}). In addition, in section \ref{Carleman_energy}, we introduce energy estimates needed to prove the main results. In section \ref{Proof}, we show the proofs of the main results. In Appendix, we give the proofs of auxiliary and original results.

\section{Preliminary and statements of main results}\label{Preliminary}
Before showing main results, we describe some definitions and settings needed to present them.

\begin{dfn}
For a vector-valued function $X\in C^2(\overline{\Omega};\mathbb{R}^d)$ and $x\in\overline{\Omega}$, a $C^2$ curve $c:[-\eta_1,\eta_2]\to \overline{\Omega}$ for some $\eta_1\ge 0$ and $\eta_2\ge 0$ with $\eta_1+\eta_2>0$ is called an integral curve of $X$ through $x$ if it solves the following initial problem for ordinary differential equations
\[\begin{cases}\displaystyle c'(\sigma):=\frac{dc}{d\sigma}(\sigma)=X(c(\sigma)),\quad \sigma\in[-\eta_1,\eta_2],\\ c(0)=x.\end{cases}\]
\end{dfn}

\begin{remark}\label{extension}
If $c_x$ denotes the integral curve of $X$ through $x$, then $c_x(\sigma)$ is $C^2$ with respect to $x\in\overline{\Omega}$.
\end{remark}

\begin{dfn}
Let $a,b\in\mathbb{R}$ with $a<b$. An integral curve $c:[a,b]\to\overline{\Omega}$ is called maximal if it cannot be extended in $\overline{\Omega}$ to a segment $[a-\eta_1,b+\eta_2]$ for some $\eta_1\ge 0$ and $\eta_2\ge 0$ with $\eta_1+\eta_2>0$.
\end{dfn}

\begin{dfn}\label{dissipative}
A vector-valued function $X\in C^2(\overline{\Omega};\mathbb{R}^d)$ is called dissipative if the maximal integral curve $c_x$ of $X$ through $x$ is defined on a finite segment $[\sigma_-(x),\sigma_+(x)]$ and $\sigma_-\in C(\overline{\Omega})\cap H^2(\Omega)$.
\end{dfn}

\begin{remark}
If $X\in C^2(\overline{\Omega};\mathbb{R}^d)$ is dissipative, then $c_x(\sigma_-(x))$, $c_x(\sigma_+(x))\in\partial\Omega$, where $c_x$ is the maximal integral curve of $X$ through $x$.
\end{remark}

\begin{figure}[htbp]
\centering\includegraphics[scale=0.25]{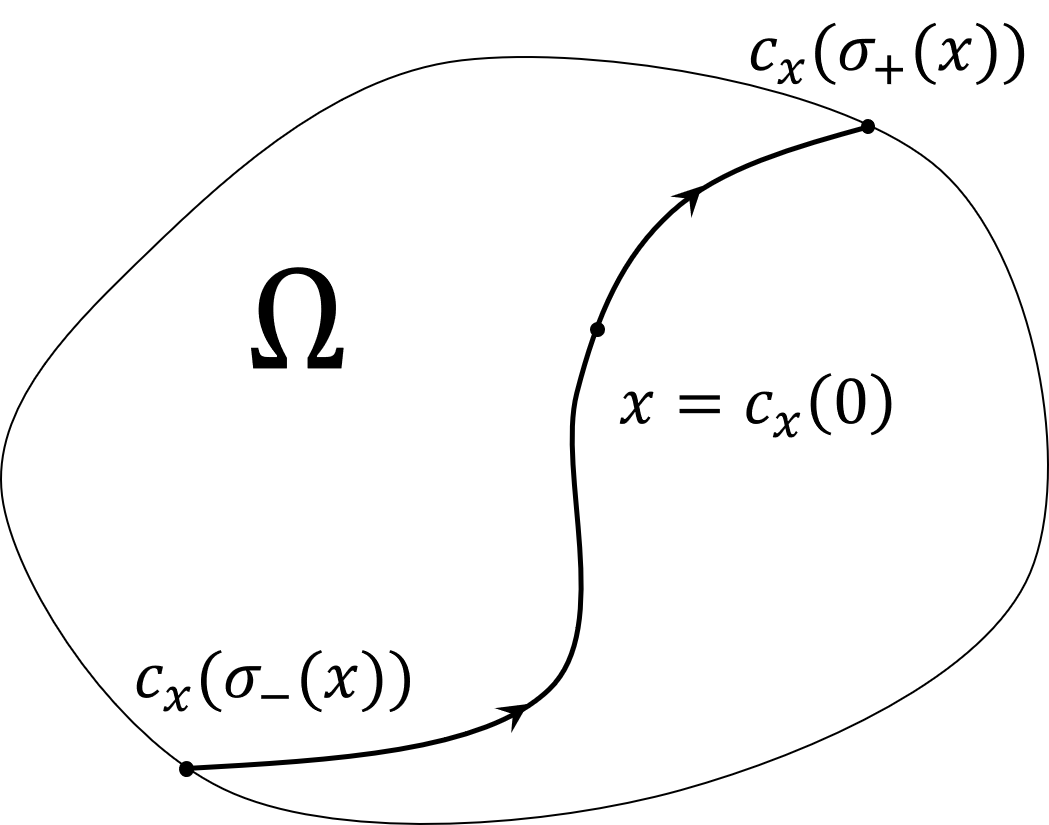}
\caption{$c_x$ is the maximal integral curve of $X$ through $x$.}
\end{figure}

The terminology dissipative for vector fields seems not to be widely-used. However, the authors use this terminology on the analogy of CDRM (compact dissipative Riemannian manifold) used in a setting of integral geometry problems for tensor fields. In this subject, CDRM is equivalent to the absence of a geodesic of infinite length in a compact Riemannian manifold with strictly convex boundary (e.g., \cite[Chapter 4]{Sharafutdinov1994}).

We assume the followings on the vector-valued function $A\in C^2(\overline{Q};\mathbb{R}^d)$:

\begin{equation}\label{positivity}\exists\rho>0\ \text{s.t.}\ \min_{(x,t)\in\overline{Q}}|A(x,t)|\ge\rho\ ;\end{equation}
\[\exists t_*\in[0,T)\ \text{s.t.}\ A(\cdot,t_*)\ \text{is dissipative}.\]
Without loss of generality, we assume $t_*=0$ in the above, i.e.,
\begin{equation}\label{finiteness}A(\cdot,0)\ \text{is dissipative}\end{equation}
because it suffices to consider the change of variables $\tilde{t}:=t-t_*$ and $\tilde{A}(\cdot,\tilde{t}):=A(\cdot,\tilde{t}+t_*)$.

\begin{remark}
In the case $A^0\equiv 1$ and $A=A(x)$, \eqref{finiteness} means that any maximal characteristic curves have finite length.
\end{remark}

\begin{example}
Let $d=2$ and $B_r:=\{(x,y)\in \mathbb{R}^2\mid x^2+y^2<r^2\}$ for $r>0$. Then, $X(x,y):=\begin{pmatrix}-x\\ -1\end{pmatrix}$ on $\Omega=B_r\cap \{y>0\}$ is dissipative because we see $\sigma_-$ is smooth on $\overline{\Omega}$. However, $Y(x,y):=\begin{pmatrix}-y\\ x\end{pmatrix}$ on $B_r\setminus \overline{B_\frac{r}{2}}$ is not dissipative because we can not define $\sigma_-$.

\begin{figure}[htbp]
\centering\includegraphics[scale=0.4]{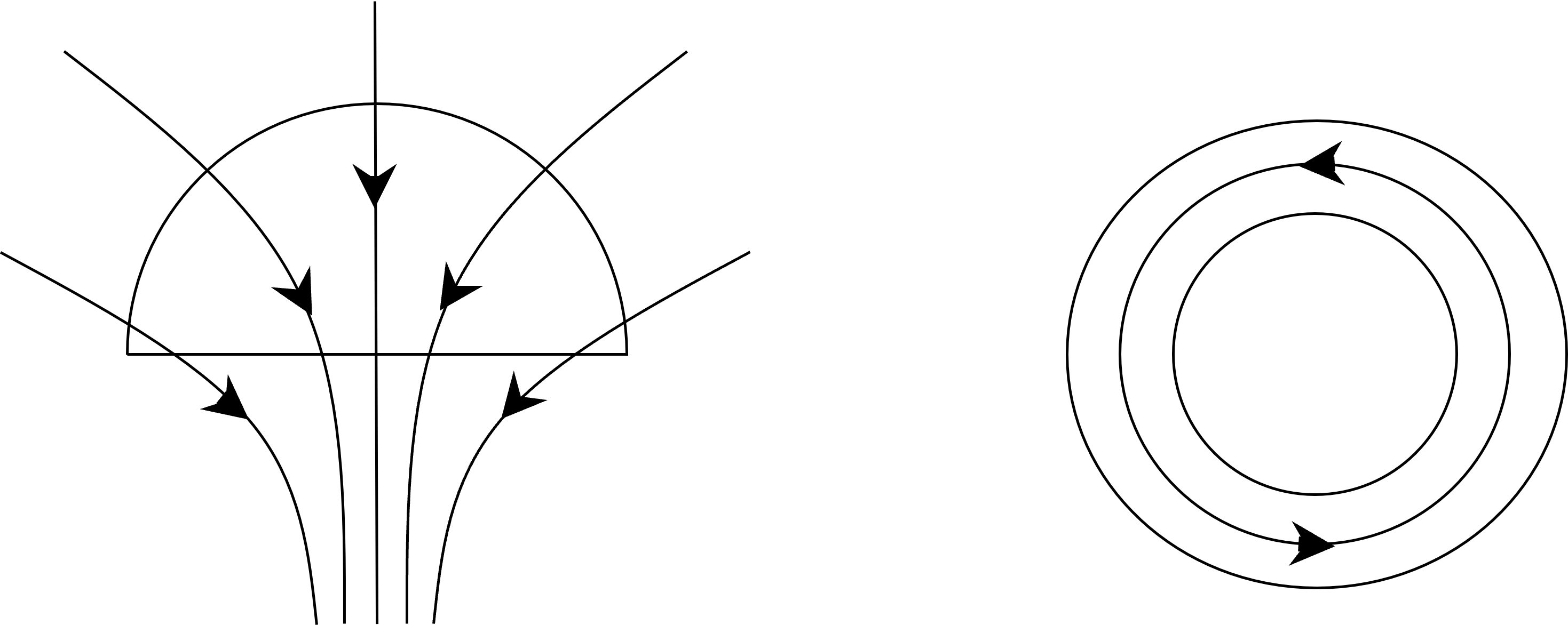}
\caption{Pictures of $X$ (left) and $Y$ (right).}
\end{figure}
\end{example}

Under the assumption \eqref{finiteness}, we can give the following notations. For a fixed $x\in\overline{\Omega}$, let $c_x:[\sigma_-(x),\sigma_+(x)]\to\overline{\Omega}$ be the maximal integral curve of $A(\cdot,0)$ through $x$, i.e., $c_x$ satisfies
\[\begin{cases}c_x'(\sigma)=A(c_x(\sigma),0),\quad \sigma\in[\sigma_-(x),\sigma_+(x)],\\ c_x(0)=x.\end{cases}\]
Since $c_x$ is a rectifiable curve by \eqref{finiteness}, we can define the function $\varphi_0$ on $\overline{\Omega}$ as the length of the arc of the maximal integral curves defined on $[\sigma_-(x),0]$:
\begin{equation}\label{distance}\varphi_0(x):=\int_{\sigma_-(x)}^0|c_x'(\sigma)|d\sigma,\end{equation}
the integral of which is independent of a choice of parameters.

\begin{lem}\label{regularity}
Let $A\in C^2(\overline{Q};\mathbb{R}^d)$ be a vector-valued function. Assume \eqref{positivity} and \eqref{finiteness}. Then, the function $\varphi_0$ defined by \eqref{distance} is in the class $C(\overline{\Omega})\cap H^2(\Omega)$.
\end{lem}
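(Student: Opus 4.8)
The plan is to establish regularity of $\varphi_0$ by expressing it as a composition of maps whose smoothness we can control, and then transferring Sobolev regularity from $\sigma_-$ (which is assumed to be in $C(\overline{\Omega})\cap H^2(\Omega)$ by Definition \ref{dissipative}) to $\varphi_0$. First I would fix the natural parametrization issue: since $c_x$ solves $c_x'(\sigma)=A(c_x(\sigma),0)$, the integrand in \eqref{distance} is simply $|c_x'(\sigma)|=|A(c_x(\sigma),0)|$, so that
\[
\varphi_0(x)=\int_{\sigma_-(x)}^{0}|A(c_x(\sigma),0)|\,d\sigma.
\]
By \eqref{positivity} this integrand is bounded below by $\rho>0$ and, since $A\in C^2(\overline Q;\mathbb R^d)$ and $\overline\Omega$ is compact, bounded above; in particular the integral is finite and $\varphi_0$ is well defined and nonnegative.

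Next I would record the regularity of the flow. By Remark \ref{extension}, $(x,\sigma)\mapsto c_x(\sigma)$ is $C^2$ jointly (standard smooth dependence of ODE solutions on initial data, since $A(\cdot,0)\in C^2(\overline\Omega;\mathbb R^d)$); equivalently, writing $\Phi(\sigma,x):=c_x(\sigma)$ for the flow of $A(\cdot,0)$, the map $\Phi$ is $C^2$ on its domain, which we may take to contain $\{(\sigma,x): x\in\overline\Omega,\ \sigma\in[\sigma_-(x),0]\}$ after the usual extension of the vector field to a neighborhood. Then $g(\sigma,x):=|A(\Phi(\sigma,x),0)|$ is $C^2$ in $(\sigma,x)$ (here $|\cdot|$ is smooth away from the origin, and $A$ never vanishes by \eqref{positivity}), and
\[
\varphi_0(x)=-\int_{0}^{\sigma_-(x)} g(\sigma,x)\,d\sigma =: F(\sigma_-(x),x),\qquad F(\tau,x):=-\int_0^{\tau} g(\sigma,x)\,d\sigma .
\]
The map $F$ is $C^2$ in $(\tau,x)$ by differentiation under the integral sign, with $\partial_\tau F(\tau,x)=-g(\tau,x)$ and $\nabla_x F(\tau,x)=-\int_0^\tau \nabla_x g(\sigma,x)\,d\sigma$, both continuous; in particular $F$ is Lipschitz on the (bounded) relevant set.

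It remains to combine $\sigma_-\in C(\overline\Omega)\cap H^2(\Omega)$ with the $C^2$ (hence Lipschitz) map $F$ to conclude $\varphi_0=F(\sigma_-(\cdot),\cdot)\in C(\overline\Omega)\cap H^2(\Omega)$. Continuity is immediate from continuity of $F$ and of $\sigma_-$. For the $H^2$ statement I would argue by the chain rule for Sobolev functions: since $\sigma_-\in H^2(\Omega)\cap C(\overline\Omega)$ it has, in particular, bounded range, and on bounded $\Omega$ one has $\sigma_-\in H^2\cap L^\infty$; composing a $C^2$ function $F$ of $(\tau,x)$ with $(\sigma_-(x),x)$ gives a function whose first derivatives are $\partial_\tau F\cdot\partial_{x_j}\sigma_- + \partial_{x_j}F$ and whose second derivatives involve $\partial_\tau F\cdot\partial^2\sigma_-$ together with products of bounded $C^1$-quantities with the $H^1$ functions $\partial_{x_j}\sigma_-$, all of which lie in $L^2(\Omega)$ because $\sigma_-\in H^2$, $D^2F,DF$ are bounded on the range, and $D\sigma_-\in H^1\hookrightarrow$ (products of $L^2$ with $L^\infty$). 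The one technical point to be careful about is justifying this chain rule when $F$ is only $C^2$ (not, say, globally Lipschitz with Lipschitz gradient on all of $\mathbb R^{d+1}$): this is handled by first extending $F$ to a compactly supported $C^2$ function on $\mathbb R^{d+1}$ (using that we only need its values on a bounded set containing the graph of $x\mapsto(\sigma_-(x),x)$), after which $F\in C^2_c$ and the Sobolev chain rule applies directly to $\varphi_0(x)=F(\sigma_-(x),x)$, or alternatively by approximating $\sigma_-$ in $H^2$ by smooth functions and passing to the limit using dominated convergence together with the uniform bounds on $DF,D^2F$.

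The main obstacle is precisely this last step — propagating $H^2$ regularity through the composition, and in particular controlling the term $\partial_\tau F(\sigma_-(x),x)\,D^2\sigma_-(x)$ and the mixed terms $D^2_{\tau x}F\cdot D\sigma_-$ in $L^2$; everything reduces to the hypothesis $\sigma_-\in H^2(\Omega)$ from Definition \ref{dissipative} together with the $C^2$ smooth dependence of the flow from Remark \ref{extension} and the nonvanishing \eqref{positivity}. The reduction of $\varphi_0$ to the arc-length integral $\int_0^{\sigma_-(x)}|A(\Phi(\sigma,x),0)|\,d\sigma$ and the identification $\varphi_0=F(\sigma_-(\cdot),\cdot)$ with $F\in C^2$ are the conceptual heart; the rest is the routine Sobolev chain-rule bookkeeping sketched above.
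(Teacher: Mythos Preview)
Your proposal is correct and is precisely the natural fleshing-out of the paper's proof, which consists of the single sentence ``It follows from Definition \ref{dissipative} and Remark \ref{extension}.'' You have supplied exactly the implicit argument---writing $\varphi_0(x)=F(\sigma_-(x),x)$ with $F\in C^2$ via Remark \ref{extension} and \eqref{positivity}, then transferring the $C(\overline\Omega)\cap H^2(\Omega)$ regularity from $\sigma_-$ (Definition \ref{dissipative}) through the Sobolev chain rule---that the paper leaves to the reader.
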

\begin{proof}It follows from Definition \ref{dissipative} and Remark \ref{extension}.\end{proof}

To prove the global Lipschitz stability for inverse problems for the hyperbolic equations, the observation time should be given large enough for the solutions to reach the boundaries owing to the finite propagation speeds (see Bardos, Lebeau, and Rauch \cite{Bardos1992}). Then, we define the following quantities to describe this situation mathematically.

For the positive function $A^0\in C^1(\overline{\Omega})\cap L^\infty(\Omega\times(0,\infty))$ and $\varphi_0$ defined by \eqref{distance}, we define the positive number
\begin{equation}\label{T_0}T_0:=\frac{\displaystyle \Big(\sup_{x\in\Omega,t>0}A^0(x,t)\Big)\Big(\max_{x\in\overline{\Omega}}\varphi_0(x)\Big)}{\rho}.\end{equation}

Moreover, considering inverse problems for the hyperbolic equation with time-dependent principal part, we will assume
\begin{equation}\label{spd}\exists C>0\ \text{s.t.}\ \forall\xi\in\mathbb{R}^d,\ \forall(x,t)\in\overline{Q},\quad |\partial_tA(x,t)\cdot\xi|\le C|A(x,t)\cdot\xi|.\end{equation}
The condition \eqref{spd} will be decisive in the the energy estimate given in Lemma \ref{energy} and in the proofs of Theorem \ref{ISP} and Theorem \ref{ICP}. 

\begin{remark}\label{d=1}
When $d=1$, \eqref{positivity} implies \eqref{spd}.
\end{remark}

If a non-vanishing vector valued function $A$ satisfies \eqref{spd}, then $A$ has the following structure.

\begin{prop}\label{structure}
If a vector-valued function $A\in C^2(\overline{Q};\mathbb{R}^d)$ satisfies \eqref{positivity} and \eqref{spd}, then $A$ can be represented by
\[A(x,t)=A(x,0)e^{\int_0^t\phi(x,s)ds},\quad (x,t)\in \overline{Q}\]
for some function $\phi\in C^1(\overline{Q})$.
\end{prop}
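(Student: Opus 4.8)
The plan is to reconstruct the scalar factor directly. Fix $x \in \overline{\Omega}$. Condition \eqref{positivity} guarantees $A(x,t) \neq 0$ for all $t$, so for each $x$ there is a well-defined component $A^j$ with $A^j(x,0) \neq 0$; by continuity this persists on a neighbourhood in $t$, and in fact I claim $A(x,t)$ never vanishes and stays "parallel" to $A(x,0)$. The key observation is that \eqref{spd}, applied with $\xi = e_k$ the standard basis vectors, says $|\partial_t A^k(x,t)| \le C|A(x,t)\cdot e_k|$ is not quite what we want — rather, the sharp way to use \eqref{spd} is to test against $\xi$ chosen so that $A(x,t)\cdot\xi = 0$: then $\partial_t A(x,t)\cdot\xi = 0$ as well. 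This means that the hyperplane $A(x,t)^{\perp}$ is independent of $t$, hence the direction of $A(x,t)$ is independent of $t$. So there is a scalar function $\lambda(x,t)$ with $\lambda(x,0) = 1$ and $A(x,t) = \lambda(x,t) A(x,0)$, and $\lambda$ is $C^2$ (resp.\ as smooth as $A$ allows) since one can solve for it as a ratio of a nonvanishing component: $\lambda(x,t) = A^j(x,t)/A^j(x,0)$ locally, where $j$ is chosen with $A^j(x,0)\neq 0$.

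Next I would show $\lambda > 0$ everywhere. We have $\lambda(x,0) = 1 > 0$, and $\lambda(x,\cdot)$ is continuous; if $\lambda(x,t_1) = 0$ for some first time $t_1 > 0$ then $A(x,t_1) = 0$, contradicting \eqref{positivity}. Hence $\lambda > 0$ on $\overline{Q}$. Now set $\phi(x,t) := \partial_t \lambda(x,t)/\lambda(x,t) = \partial_t \log \lambda(x,t)$; this is well-defined and $C^1$ in $(x,t)$ because $\lambda$ is $C^2$ and bounded away from $0$ on the compact set $\overline{Q}$. Integrating, $\log\lambda(x,t) = \log\lambda(x,0) + \int_0^t \phi(x,s)\,ds = \int_0^t \phi(x,s)\,ds$, so $\lambda(x,t) = \exp\!\big(\int_0^t \phi(x,s)\,ds\big)$, which gives exactly $A(x,t) = A(x,0)\exp\!\big(\int_0^t\phi(x,s)\,ds\big)$.

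The one point requiring a little care — and the main obstacle — is the regularity and global consistency of $\lambda$ and hence of $\phi$: the index $j$ with $A^j(x,0)\neq 0$ may depend on $x$, so a priori one only gets $\lambda$ defined as a smooth ratio on overlapping open sets. One must check these local definitions agree on overlaps (they do, since $\lambda$ is intrinsically the scalar with $A(x,t) = \lambda(x,t)A(x,0)$, independent of which nonzero component one divides by) and that the resulting global $\lambda$ inherits the $C^2$ regularity of $A$ (immediate from the local ratio expressions, each denominator being a nonvanishing $C^2$ function of $x$). Here I would use \eqref{positivity} once more to get a uniform lower bound $\min_{(x,t)}|A(x,t)\cdot\xi_x| \ge \rho$ along a continuously (locally) varying unit direction, ensuring $\lambda$ and $1/\lambda$ are uniformly bounded, so that $\phi = \partial_t\lambda/\lambda \in C^1(\overline{Q})$. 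With these regularity bookkeeping items in place the representation follows.
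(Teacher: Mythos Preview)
Your argument is correct and follows essentially the same route as the paper: test \eqref{spd} against vectors orthogonal to $A(x,t)$ to force $\partial_t A(x,t)\in\operatorname{span}A(x,t)$, then integrate the resulting scalar ODE. The paper writes $\partial_tA=\phi A$ directly (treating $d=1$ separately, where orthogonal testing is vacuous), whereas you pass through the scalar $\lambda$ and recover $\phi=\partial_t\log\lambda$; your extra bookkeeping on the regularity and global well-definedness of $\lambda$ is more careful than the paper's bare assertion that $\phi\in C^1(\overline{Q})$, but the core idea is identical.
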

The proof of Proposition \ref{structure} is presented in Appendix. Proposition \ref{structure} is decisive in the realization of a weight function for the Carleman estimate, which will be given in Lemma \ref{lemma}.

Now, we define some notations. Set
\[\Sigma_+:=\{(x,t)\in\partial\Omega\times(0,T)\mid A(x,t)\cdot\nu(x)>0\},\]
where we recall $\nu$ is the outer unit normal to $\partial\Omega$. Moreover, we set $\Sigma_-:=(\Sigma_+)^c=(\partial\Omega\times(0,T))\setminus\Sigma_+$.

We use the notations $H^0(\Omega):=L^2(\Omega)$, $H^0(0,T;H^1(\Omega)):=L^2(0,T;H^1(\Omega))$, and $\partial_t^0w=w$ for a function $w$ throughout this paper to avoid notational complexity.
\subsection{Inverse source problems}
We consider the initial boundary value problem
\begin{align}\label{boundary}\begin{cases}Pu+p(x,t)u=R(x,t)f(x)\quad &\text{in}\ Q,\\
u=0\quad &\text{on}\ \Sigma_-,\\
u(\cdot,0)=0\quad &\text{on}\ \Omega,\end{cases}\end{align}
where $p\in W^{1,\infty}(0,T;L^\infty(\Omega))$, $R\in H^1(0,T;L^\infty(\Omega))$, and $f\in L^2(\Omega)$. Given $A^0$, $A$, $p$, and $R$, we consider the inverse source problem to determine the source term $f$ in $\Omega$ by observation data $u$ on $\Sigma_+$.

\begin{thm}\label{ISP}
Let $A^0\in C^1(\overline{Q})\cap L^\infty(\Omega\times(0,\infty))$ satisfying $\displaystyle\min_{(x,t)\in\overline{Q}}A^0(x,t)>0$, and $A\in C^2(\overline{Q};\mathbb{R}^d)$ satisfying \eqref{positivity}, \eqref{finiteness}, and \eqref{spd}. Let $p\in W^{1,\infty}(0,T;L^\infty(\Omega))$, $R\in H^1(0,T;L^\infty(\Omega))$, and $f\in L^2(\Omega)$ satisfying
\begin{equation}\label{R}\exists m_0>0\ \text{s.t.}\ |R(x,0)|\ge m_0\quad a.e.\ x\in\Omega.\end{equation}
Assume
\begin{equation}\label{time}T_0<T,\end{equation}
where $T_0$ is defined by \eqref{T_0}, and there exists a function $u$ satisfying \eqref{boundary} in the class
\[u\in \bigcap_{k=1}^2H^{k}(0,T;H^{2-k}(\Omega)).\]
Then, there exists a constant $C>0$ independent of $f$ and $u$ such that
\[\|f\|_{L^2(\Omega)}\le C\sum_{k=0}^1\|\partial_t^ku\|_{L^2(\Sigma_+)}.\]
\end{thm}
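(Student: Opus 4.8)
The plan is to apply the Bukhgeim--Klibanov method, which converts the inverse source problem into a problem to which the Carleman estimate of Proposition \ref{Carleman} applies. First I would differentiate the equation \eqref{boundary} in time: setting $v:=\partial_tu$, the function $v$ satisfies a hyperbolic equation of the same principal type, namely $Pv+\tilde{p}v = (\partial_tR)f - (\partial_tA^0)\partial_tu - (\partial_tA)\cdot\nabla u - (\partial_tp)u$ with a modified zeroth-order coefficient, together with homogeneous boundary data on $\Sigma_-$; crucially, evaluating the original equation at $t=0$ and using $u(\cdot,0)=0$ gives $A^0(x,0)v(x,0) = R(x,0)f(x)$, so by \eqref{R} and positivity of $A^0$ we recover $\|f\|_{L^2(\Omega)} \le C\|v(\cdot,0)\|_{L^2(\Omega)}$. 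The task thus reduces to estimating $v(\cdot,0)$ in terms of the lateral data $\partial_t^ku$ on $\Sigma_+$.

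Next I would apply the Carleman estimate (Proposition \ref{Carleman}) to $v$ with the weight function built from $\varphi_0$ as in Lemma \ref{lemma}, whose linearity in $t$ is what makes it unnecessary to extend $u$ to negative times. The right-hand side of the Carleman estimate contains the $L^2$-norm of $Pv+\tilde pv$ against the Carleman weight; the term $(\partial_tR)f$ is controlled by $\|f\|_{L^2}$, hence ultimately by $\|v(\cdot,0)\|_{L^2}$ via the step above, while the terms $(\partial_tA^0)\partial_tu$, $(\partial_tA)\cdot\nabla u$, $(\partial_tp)u$ must be absorbed. The condition \eqref{spd} is precisely what lets the dangerous first-order term $(\partial_tA)\cdot\nabla u$ be dominated pointwise by $|A\cdot\nabla u| \le |Pu| + |A^0\partial_tu|$, i.e.\ by lower-order quantities plus the source, which is why it is flagged as decisive. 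The boundary terms produced by integration by parts are supported on $\partial\Omega\times(0,T)$; on $\Sigma_-$ they vanish by the boundary condition (for $u$ and, after differentiating, for $v$), and on $\Sigma_+$ they are exactly the observation $\sum_{k=0}^1\|\partial_t^ku\|_{L^2(\Sigma_+)}$.

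The mechanism to close the argument is the standard Bukhgeim--Klibanov trick combining the Carleman estimate with an energy estimate. I would use the energy estimate (Lemma \ref{energy}, referenced in the excerpt) to bound $\int_Q|\nabla u|^2$, $\int_Q|\partial_t u|^2$, and in particular the full space-time energy of $u$ and $v$, weighted appropriately, by the data on $\Sigma_+$ together with $\|f\|_{L^2(\Omega)}$; then the term $\|v(\cdot,0)\|_{L^2(\Omega)}^2$ is recovered from the left side of the Carleman estimate by writing $v(x,0)^2 = -\int_0^T \partial_t\big(e^{2s\alpha}v^2\big)(x,t)\,dt + (\text{boundary-in-}t)$-type identity adapted to the linear-in-$t$ weight, the hypothesis \eqref{time} $T_0<T$ guaranteeing that the weight is arranged so that the $t=T$ contribution has the favorable sign (the solution has had time to reach $\partial\Omega$). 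Choosing the large Carleman parameter $s$ big enough absorbs all lower-order bulk terms into the left-hand side, leaving $\|f\|_{L^2(\Omega)}\le C\sum_{k=0}^1\|\partial_t^ku\|_{L^2(\Sigma_+)}$.

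The main obstacle I anticipate is the bookkeeping of the first-order error term $(\partial_tA)\cdot\nabla u$: unlike in the time-independent case it cannot simply be dropped, and one must verify that \eqref{spd} (equivalently the exponential structure of Proposition \ref{structure}) really does let it be reabsorbed uniformly in the Carleman parameter without circular dependence on $\|f\|$. A secondary delicate point is matching the time-boundary terms at $t=0$ and $t=T$ coming from the linear-in-$t$ weight — ensuring the $t=T$ term is nonnegative (via $T_0<T$) and that the $t=0$ term reproduces $\|v(\cdot,0)\|_{L^2}$ with a constant independent of $s$ up to the controllable exponential factor $e^{2s\varphi_0}$ — together with confirming that the regularity $u\in\bigcap_{k=1}^2 H^k(0,T;H^{2-k}(\Omega))$ is exactly enough to justify differentiating the equation and applying both the Carleman and energy estimates to $v$.
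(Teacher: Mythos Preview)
Your proposal is correct and follows the same Bukhgeim--Klibanov strategy as the paper: differentiate in time, apply the Carleman estimate of Proposition~\ref{Carleman} to $\partial_tu$, use \eqref{spd} together with the equation itself to convert $\partial_tA\cdot\nabla u$ into lower-order terms, recover $f$ from the $t=0$ boundary term via \eqref{R}, and kill the $t=T$ term with the energy estimate of Lemma~\ref{energy} plus the exponential smallness $\varphi(\cdot,T)<-\kappa$ guaranteed by \eqref{time}. Two small corrections to your description: the Carleman estimate in Proposition~\ref{Carleman} already carries $s\int_\Omega e^{2s\varphi(x,0)}|v(x,0)|^2dx$ on its left-hand side, so no separate integration-in-$t$ identity is needed to extract $v(\cdot,0)$; and Lemma~\ref{energy} does not bound $\int_Q|\nabla u|^2$ (nor is that needed, since---as you correctly note earlier---\eqref{spd} and the equation already reduce the gradient term to $|Rf|$, $|\partial_tu|$, and $|u|$).
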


\subsection{Inverse coefficient problems}
We consider the initial boundary value problem
\begin{align}\label{boundary2}\begin{cases}Pu+p(x,t)u=0\quad &\text{in}\ Q,\\
u=g\quad &\text{on}\ \Sigma_-,\\
u(\cdot,0)=\alpha\quad &\text{on}\ \Omega,\end{cases}\end{align}
where $p\in W^{1,\infty}(0,T;L^\infty(\Omega))$, $\displaystyle g\in \bigcap_{k=1}^2H^k(0,T;H^{\frac{3}{2}-k}(\partial\Omega))$, and $\alpha\in H^1(\Omega)$ satisfying the compatibility conditions. In the following two subsections, we present two nonlinear inverse coefficient problems.

\subsubsection{Zeroth-order coefficient}\label{Zeroth-order}
Given $A^0$, $A$, $g$, and $\alpha$, we consider the inverse coefficient problem to determine the time-independent zeroth-order coefficient $p=p(x)$ in $\Omega$ by observation data on $\Sigma_+$.

For a fixed $M>0$, define the conditional set
\[D(M):=\{p\in L^\infty(\Omega)\mid \|p\|_{L^\infty(\Omega)}\le M\}.\]

\begin{thm}\label{ICP}
Let $M>0$ be fixed, $A^0\in C^1(\overline{Q})\cap L^\infty(\Omega\times(0,\infty))$satisfying $\displaystyle\min_{(x,t)\in\overline{Q}}A^0(x,t)>0$, and $A\in C^2(\overline{Q};\mathbb{R}^d)$ satisfying \eqref{positivity}, \eqref{finiteness}, and \eqref{spd}. Let $p_i\in D(M)$ for $i=1,2$, $\displaystyle g\in \bigcap_{k=1}^2H^k(0,T;H^{\frac{3}{2}-k}(\partial\Omega))$, and $\alpha\in H^1(\Omega)$ satisfying
\begin{equation}\label{alpha}\exists m_0> 0\ \text{s.t.}\ |\alpha(x)|\ge m_0\quad a.e.\ x\in\Omega.\end{equation}
Assume $T_0<T$, where $T_0$ is defined by \eqref{T_0}, and for $i=1,2$ there exist functions $u_i$ satisfying \eqref{boundary2} with $p=p_i$ in the class
\[u_i\in \bigcap_{k=1}^2H^k(0,T;H^{2-k}(\Omega))\]
such that
\[u_2\in H^1(0,T;L^\infty(\Omega))\ \text{and}\ \|u_2\|_{H^1(0,T;L^\infty(\Omega))}\le M.\]
Then, there exists a constant $C>0$ independent of $p_i\in D(M)$ for $i=1,2$ such that
\[\|p_1-p_2\|_{L^2(\Omega)}\le C\sum_{k=0}^1\|\partial_t^ku_1-\partial_t^ku_2\|_{L^2(\Sigma_+)}.\]
\end{thm}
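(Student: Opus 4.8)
The plan is to reduce the inverse coefficient problem for $p$ to an inverse source problem of the type already handled in Theorem \ref{ISP}, following the classical Bukhgeim--Klibanov strategy. First I would set $u:=u_1-u_2$, $p:=p_1-p_2$, $g:=0$ on $\Sigma_-$, and $u(\cdot,0)=0$ on $\Omega$, and observe that $u$ satisfies
\[
Pu+p_1u=-p\,u_2\quad\text{in }Q,\qquad u=0\text{ on }\Sigma_-,\qquad u(\cdot,0)=0\text{ on }\Omega.
\]
The right-hand side has the form $R(x,t)f(x)$ with $f:=p$ and $R:=-u_2$, except that $R$ is now $t$-dependent in a way that is not separable; the key observation is that at $t=0$ we have $R(x,0)=-u_2(x,0)=-\alpha(x)$, and the lower bound \eqref{alpha} plays exactly the role of \eqref{R}. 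So morally we are in the situation of Theorem \ref{ISP} with the source $p(x)$ and the coefficient $R(x,t)=-u_2(x,t)$, and the hypotheses $u_2\in H^1(0,T;L^\infty(\Omega))$ with $\|u_2\|_{H^1(0,T;L^\infty(\Omega))}\le M$ are precisely what is needed to keep $R$ in the class $H^1(0,T;L^\infty(\Omega))$ with uniform control.

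The cleanest route, then, is to re-run the proof of Theorem \ref{ISP} with this choice of $f$ and $R$, rather than invoking its statement verbatim (since the statement of Theorem \ref{ISP} assumes $R$ is given a priori, not defined through one of the solutions). Concretely, I would differentiate the equation for $u$ once in $t$, write $v:=\partial_t u$, and apply the global Carleman estimate of Proposition \ref{Carleman} to $u$ and to $v$ with the weight function $\varphi$ built from $\varphi_0$ as in Lemma \ref{lemma}; the condition \eqref{time}, $T_0<T$, guarantees that the weight is admissible on all of $\overline Q$. Then the Bukhgeim--Klibanov argument: evaluate the Carleman-weighted energy at $t=0$, use the initial conditions $u(\cdot,0)=0$ and $Pu(\cdot,0)=-p\,\alpha$ (so $A^0(\cdot,0)\partial_t u(\cdot,0)=-p\,\alpha-A(\cdot,0)\cdot\nabla u(\cdot,0)=-p\,\alpha$ since $u(\cdot,0)\equiv0$), whence $\|p\,\alpha\|_{L^2(\Omega)}$ is controlled by the weighted norm of $v$ at $t=0$, and finally $\|p\|_{L^2(\Omega)}\lesssim\|p\,\alpha\|_{L^2(\Omega)}$ by \eqref{alpha}. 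The condition \eqref{spd}, through Proposition \ref{structure} and the energy estimate (Lemma \ref{energy}), is what lets one absorb the first-order-in-$t$ terms coming from $\partial_t A$ when passing from the equation for $u$ to the equation for $v$.

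The main obstacle — and the reason the hypotheses on $u_2$ are stated the way they are — is controlling the terms $p\,\partial_t u_2$ and $(\partial_t p_1) u$ that appear after differentiating: these are lower-order but must be shown to be absorbable either into the left-hand side of the Carleman estimate (for large Carleman parameter $s$) or into the boundary data on $\Sigma_+$, and for that one needs the uniform $W^{1,\infty}$-type bound on $u_2$ and the $L^\infty$-bound on $p_i$ furnished by $D(M)$. A secondary technical point is the justification of the energy estimates in the indicated anisotropic Sobolev spaces $\bigcap_{k=1}^2 H^k(0,T;H^{2-k}(\Omega))$, ensuring all the integrations by parts and traces on $\Sigma_+$ make sense; this is where Lemma \ref{energy} (and the regularity Lemma \ref{regularity} for $\varphi_0$) does the work. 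Once these absorptions are in place, collecting the weighted estimates at $t=0$ and dividing out the (strictly positive, by \eqref{alpha}) factor $\alpha$ yields the asserted Lipschitz bound with a constant depending only on $M$, $\Omega$, $T$, $A^0$, $A$, and the weight, hence independent of $p_1,p_2\in D(M)$.
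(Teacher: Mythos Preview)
Your proposal is correct and follows exactly the paper's approach: the paper sets $v:=u_1-u_2$, $R:=-u_2$, $f:=p_1-p_2$, observes this is precisely the setting of Theorem \ref{ISP} (with \eqref{alpha} yielding \eqref{R}), and then invokes that theorem directly rather than re-running its proof. Your caution about the constant's dependence on $R$ is handled by the uniform bound $\|u_2\|_{H^1(0,T;L^\infty(\Omega))}\le M$ together with $p_1\in D(M)$, and note that since $p_1$ is time-independent the term $(\partial_t p_1)u$ you mention is identically zero.
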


\subsubsection{First-order coefficients}
We consider \eqref{boundary2} with the time-independent principal coefficients $A^0$ and $A$, more precisely, with $A^0\in C^1(\overline{\Omega})$ and $A\in C^2(\overline{\Omega};\mathbb{R}^d)$. Given $p$, finitely many initial values $\alpha$, and boundary values $g$, we consider the inverse coefficient problem to determine the time-independent coefficients $A^0$ and $A$ simultaneously by finitely many observation data on $\Sigma_+$.

Let $\rho>0$ be fixed. We will assume that the unknown coefficients $A^0$ and $A$ satisfy the following condition:
\begin{equation}\label{time2}\frac{\displaystyle\Big(\max_{x\in\overline{\Omega}}A^0(x)\Big)\Big(\max_{x\in\overline{\Omega}}\varphi_0(x)\Big)}{\rho}<T,\end{equation}
where $\varphi_0$ is defined by \eqref{distance}.

For $A\in C^2(\overline{\Omega};\mathbb{R}^d)$, set
\[\Gamma_{+,A}:=\{x\in\partial\Omega\mid A(x)\cdot\nu(x)>0\}\]
and $\Gamma_{-,A}:=\partial\Omega\setminus\Gamma_{+,A}$.

For fixed $M>0$, $\rho>0$, and a subset $\Gamma\subset\partial\Omega$, define the conditional set
\begin{align*}&D(M,\rho,\Gamma)\\
&:=\left\{(A^0,A)\in C^1(\overline{\Omega})\times C^2(\overline{\Omega};\mathbb{R}^d)\middle| \begin{cases}\|A^0\|_{C^1(\overline{\Omega})}+\|A\|_{C^2(\overline{\Omega};\mathbb{R}^d)}\le M,\\
\displaystyle \min_{x\in\overline{\Omega}}A^0(x)\ge\rho,\ \min_{x\in\overline{\Omega}}|A(x)|\ge\rho,\\
\eqref{finiteness},\ \eqref{time2},\ \text{and}\ \Gamma_{+,A}\subset\Gamma\ \text{hold}.\end{cases}\right\}.\end{align*}

\begin{thm}\label{ICP2}
Let $M>0$, $\rho>0$, $\Gamma\subset\partial\Omega$ be a subset, and $(A^0_i,A_i)\in D(M,\rho,\Gamma)$ for $i=1,2$. Let $p\in W^{1,\infty}(0,T;L^\infty(\Omega))$, $\displaystyle g_m\in \bigcap_{k=1}^2H^k(0,T;H^{\frac{3}{2}-k}(\partial\Omega))$, and $\alpha_m\in W^{1,\infty}(\Omega)$ for $m=1,\ldots,d+1$  satisfying
\begin{equation}\label{R2}\exists m_0>0\ \text{s.t.}\ |p(x,0)|\left|\det \begin{pmatrix}\alpha_1(x)& \cdots&\alpha_{d+1}(x)\\ \nabla\alpha_1(x)& \cdots& \nabla\alpha_{d+1}(x)\end{pmatrix}\right|\ge m_0\quad a.e.\ x\in\Omega.\end{equation}
Assume that for $i=1,2$ and $m=1,\ldots,d+1$ there exist functions $u_{i,m}$ satisfying \eqref{boundary2} with $P=P_i:=A^0_i\partial_t+A_i\cdot\nabla$, $g=g_m$, and $\alpha=\alpha_m$ in the class
\[u_{i,m}\in \bigcap_{k=1}^2H^k(0,T;W^{2-k,\infty}(\Omega))\]
such that for all $m=1,\ldots,d+1$,
\[\sum_{k=1}^2\|u_{2,m}\|_{H^k(0,T;W^{2-k,\infty}(\Omega))}\le M.\]
Then, there exists a constant $C>0$ independent of $(A_i^0,A_i)\in D(M,\rho,\Gamma)$ for $i=1,2$ such that
\[\sum_{\mu=0}^d\|A_1^\mu-A_2^\mu\|_{L^2(\Omega)}\le C\sum_{m=1}^{d+1}\|u_{1,m}-u_{2,m}\|_{H^1(0,T;L^2(\Gamma))}.\]
\end{thm}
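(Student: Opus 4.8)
The plan is to reduce the simultaneous determination of the $d+1$ coefficients $A^0,A^1,\dots,A^d$ to an application of the Carleman-based stability machinery already set up, exploiting the algebraic condition \eqref{R2} to invert a linear system pointwise in $x$. Write $v_m := u_{1,m}-u_{2,m}$ and $B^\mu := A_1^\mu - A_2^\mu$ for $\mu=0,\dots,d$. Subtracting the two equations $P_iu_{i,m}+pu_{i,m}=0$ gives
\[
P_1 v_m + p\,v_m = -B^0\,\partial_t u_{2,m} - B\cdot\nabla u_{2,m} \quad\text{in }Q,
\]
with $v_m=0$ on $\Sigma_{-,A_1}$ and $v_m(\cdot,0)=0$ on $\Omega$ (here I am using $(A_i^0,A_i)\in D(M,\rho,\Gamma)$, which forces $\Gamma_{+,A_i}\subset\Gamma$ so that the traces on the relevant part of the lateral boundary are controlled). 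Thus each $v_m$ solves an initial boundary value problem of exactly the form \eqref{boundary} with principal part $P_1$ (which satisfies \eqref{positivity}, \eqref{finiteness}, and — being time-independent — trivially \eqref{spd}), source term $F_m(x,t) := -B^0(x)\partial_t u_{2,m}(x,t)-B(x)\cdot\nabla u_{2,m}(x,t)$, and condition \eqref{time} holds because \eqref{time2} for $A_1^0$ gives $T_0 < T$.

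The first substantive step is to run the Bukhgeim--Klibanov argument underlying Theorem \ref{ISP}: differentiate the system in $t$, apply the global Carleman estimate (Proposition \ref{Carleman}) together with the energy estimates (Lemma \ref{energy} and the associated results), and absorb the lower-order terms using the $W^{1,\infty}$-bounds on $p$ and the $H^k(0,T;W^{2-k,\infty}(\Omega))$-bounds on $u_{2,m}$. Since the source $F_m$ is now $x$-dependent times a $t$-dependent factor only through $u_{2,m}$, rather than a clean product $R(x,t)f(x)$, I must verify that $F_m(\cdot,0)$ controls the unknowns: evaluating at $t=0$,
\[
F_m(x,0) = -B^0(x)\,\partial_t u_{2,m}(x,0) - B(x)\cdot\nabla u_{2,m}(x,0).
\]
Using the equation at $t=0$, namely $A_{2}^0(x)\partial_t u_{2,m}(x,0) = -A_2(x)\cdot\nabla\alpha_m(x) - p(x,0)\alpha_m(x)$, one rewrites $\partial_t u_{2,m}(x,0)$ in terms of $\alpha_m,\nabla\alpha_m$; likewise $\nabla u_{2,m}(x,0)=\nabla\alpha_m(x)$. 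Collecting these, the stability estimate from the Carleman/energy argument yields, for each $m$,
\[
\big\| F_m(\cdot,0) \big\|_{L^2(\Omega)} \le C\,\| v_m \|_{H^1(0,T;L^2(\Gamma))},
\]
after controlling $\|v_m\|_{H^1(0,T;L^2(\Sigma_{+,A_1}))}$ by $\|v_m\|_{H^1(0,T;L^2(\Gamma))}$ via $\Gamma_{+,A_1}\subset\Gamma$.

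The second step is the pointwise linear-algebra inversion. The $d+1$ identities $F_m(x,0) = -B^0(x)\,w_m(x) - B(x)\cdot\nabla\alpha_m(x)$, where $w_m(x)$ is the above expression for $\partial_t u_{2,m}(x,0)$, can be organized as a linear system
\[
\begin{pmatrix} \alpha_1(x) & \cdots & \alpha_{d+1}(x)\\ \nabla\alpha_1(x) & \cdots & \nabla\alpha_{d+1}(x) \end{pmatrix}^{\!\top}\!
\begin{pmatrix} \tilde B^0(x)\\ B(x) \end{pmatrix}
= \begin{pmatrix} F_1(x,0)\\ \vdots \\ F_{d+1}(x,0)\end{pmatrix},
\]
for a modified unknown $\tilde B^0$ (absorbing the factor $p(x,0)/A_2^0(x)$ multiplying $\alpha_m$ and the $A_2$-terms, using that $A_2^0\ge\rho$ and the $C^1$-bound $M$); condition \eqref{R2} guarantees the matrix is invertible a.e.\ with determinant bounded below by $m_0$ times controllable quantities, so $|\tilde B^0(x)| + |B(x)| \le C\sum_{m=1}^{d+1}|F_m(x,0)|$ a.e., and then $|B^0(x)| \le C(|\tilde B^0(x)| + |B(x)|)$ since $B^0$ is recovered from $\tilde B^0$ and $B$ via the $t=0$ equation for $u_{1,m}$ minus that for $u_{2,m}$. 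Squaring and integrating over $\Omega$ and summing over $m$ closes the estimate:
\[
\sum_{\mu=0}^d \|A_1^\mu - A_2^\mu\|_{L^2(\Omega)} \le C \sum_{m=1}^{d+1} \|u_{1,m}-u_{2,m}\|_{H^1(0,T;L^2(\Gamma))}.
\]

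The main obstacle I anticipate is bookkeeping the dependence on the unknown principal parts uniformly over $D(M,\rho,\Gamma)$: the Carleman estimate's constant a priori depends on the principal coefficients $A_1^0,A_1$, but the weight function $\varphi_0$ (length of integral curves of $A_1(\cdot,0)=A_1$) and the constants in Proposition \ref{Carleman} and Lemma \ref{energy} must be shown to be controlled solely by $M$, $\rho$, and $\Omega$ — in particular the finiteness time $T_0$ must be bounded by something strictly less than $T$ uniformly, which is exactly the role of the quantitative condition \eqref{time2} built into $D(M,\rho,\Gamma)$. A secondary technical point is that the error equation has principal part $P_1$ while the "reference" solutions $u_{2,m}$ solve the $P_2$-problem, so the mismatch $P_1 u_{2,m} - P_2 u_{2,m} = B^0\partial_t u_{2,m} + B\cdot\nabla u_{2,m}$ is itself first-order in $v$-independent quantities — this is fine because it is precisely the source $F_m$, but one must be careful that no derivative of $v_m$ of order higher than what the energy estimate controls is generated when differentiating in $t$, which is why the hypothesis places $u_{2,m}$ in $H^k(0,T;W^{2-k,\infty}(\Omega))$ rather than merely $H^k(0,T;H^{2-k}(\Omega))$.
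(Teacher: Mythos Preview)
Your overall strategy matches the paper's: subtract the two systems to get $P_1 v_m + p\,v_m = R_m(x,t)F(x)$ with $F=(B^0,B)^\top$ and $R_m=(-\partial_t u_{2,m},-\nabla u_{2,m})$, run the Bukhgeim--Klibanov argument (Carleman estimate on $\partial_t v_m$ plus the energy estimate of Lemma~\ref{energy2}), and use \eqref{R2} to invert. The two-step organization you propose, however, does not close as written.

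The claim ``for each $m$, $\|F_m(\cdot,0)\|_{L^2(\Omega)}\le C\|v_m\|_{H^1(0,T;L^2(\Gamma))}$'' is not obtainable from the Carleman/energy machinery for a single $m$ in isolation. After you apply Proposition~\ref{Carleman} to $\partial_t v_m$ and Lemma~\ref{energy2} to control $v_m(\cdot,T)$ and $\partial_t v_m(\cdot,T)$, the right-hand side still carries terms of the form $\int_Q e^{2s\varphi}|\partial_t^k R_m\,F|^2\,dxdt$ and $Cse^{-2\kappa s}\|F\|_{L^2}^2$, both of which involve the \emph{full} unknown vector $F$, not merely $F_m(\cdot,0)=R_m(\cdot,0)F$. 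Since $R_m(x,0)F(x)$ may vanish where $F(x)\neq 0$, these $|F|^2$ contributions cannot be absorbed into the left-hand side $s\|e^{s\varphi_0}F_m(\cdot,0)\|_{L^2}^2$ for fixed $m$. Absorption only becomes possible after summing over $m=1,\dots,d+1$ and invoking the invertibility of the full $(d{+}1)\times(d{+}1)$ matrix $R(x,0)$ to obtain $\sum_m|R_m(x,0)F(x)|^2\ge c|F(x)|^2$. This is exactly how the paper proceeds: it sums the Carleman estimates first, then shows $|\det R(x,0)|\ge Cm_0$ by row-reducing the first row of $R(x,0)$ via the equation at $t=0$, and only then absorbs. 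Your linear-algebra ``second step'' must therefore be interleaved with the Carleman argument, not executed afterward.

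A smaller point: in your displayed system the second block of unknowns should be $\tilde B:=\tfrac{B^0}{A_2^0}A_2-B$ rather than $B$, because the coefficient of $\nabla\alpha_m$ in $F_m(x,0)$ picks up an $A_2\cdot\nabla\alpha_m$ contribution through $\partial_t u_{2,m}(x,0)$. The paper avoids this substitution entirely by computing $\det R(x,0)$ directly with row operations, which is cleaner and makes the role of the factor $|p(x,0)|$ in \eqref{R2} transparent.
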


\section{Carleman estimate and energy estimates}\label{Carleman_energy}
In this section, we introduce the Carleman estimate and energy estimates needed to prove the main results.
\subsection{Carleman estimate}\label{Carleman_estimate}
In this subsection, we prove the global Carleman estimate for the operator $P+p(x,t)\cdot$, where $p\in L^\infty(Q)$. In section \ref{General_statements}, we present the general statement for the Carleman estimate assuming the existence of a suitable weight function $\varphi$ satisfying some sufficient conditions. In section \ref{Realization}, we construct such a weight function satisfying the sufficient conditions using $\varphi_0$ defined by \eqref{distance}.

\subsubsection{General statements}\label{General_statements}
To obtain the local in time Carleman estimate, we first assume the existence of a function $\varphi\in H^2(Q)$ satisfying
\begin{equation}\label{A.1}\exists\delta>0\ \text{s.t.}\ P\varphi(x,t)\ge\delta\quad \text{a.e.}\ (x,t)\in Q.\end{equation}

\begin{prop}\label{Carleman}
Let $A^0\in C^1(\overline{Q})$ satisfying $\displaystyle\min_{(x,t)\in\overline{Q}}A^0(x,t)>0$, $A\in C^1(\overline{Q};\mathbb{R}^d)$, and $p\in L^\infty(Q)$. Assume that there exists a function $\varphi\in H^2(Q)$ satisfying \eqref{A.1}. Then, there exist constants $s_*>0$ and $C>0$ such that
\begin{align}\label{estimate}
&s^2\int_{Q} e^{2s\varphi}|u|^2dxdt+s\int_\Omega e^{2s\varphi(x,0)}|u(x,0)|^2dx\\
&\le C\int_{Q} e^{2s\varphi}|(P+p(x,t))u|^2dxdt+Cs\int_{\Sigma_+} e^{2s\varphi}|u|^2dS dt\notag\\
&\quad+Cs\int_\Omega e^{2s\varphi(x,T)}|u(x,T)|^2dx\notag\end{align}
holds for all $s>s_*$ and $\displaystyle u\in \bigcap_{k=0}^1H^k(0,T;H^{1-k}(\Omega))$, where $dS$ denotes the area element of $\partial\Omega$.
\end{prop}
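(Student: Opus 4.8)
The plan is to run the standard Carleman argument for a first-order operator: conjugate $P$ by the exponential weight $e^{s\varphi}$, split the conjugated operator into symmetric and antisymmetric parts, and use an integration-by-parts (energy-type) identity to produce the positive bulk term $s^2\int_Q e^{2s\varphi}|u|^2$. Concretely, I would set $w:=e^{s\varphi}u$, so that $u\in\bigcap_{k=0}^1 H^k(0,T;H^{1-k}(\Omega))$ corresponds to $w$ in the same class, and compute $e^{s\varphi}Pu = P w - s(P\varphi)w =: P_s w$. Since $A^0>0$ on $\overline Q$ and $\varphi$ satisfies \eqref{A.1}, the zeroth-order term $-s(P\varphi)$ has a definite sign with lower bound $-s\delta$; this is the mechanism that, after the energy identity, yields the coercive term in $s^2$.

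The core computation is the pointwise identity obtained by multiplying $P_s w$ by $w$ (real part) and rearranging. Writing $P w = A^0\partial_t w + A\cdot\nabla w$, one has
\[
(P_s w)\,w = \frac12 A^0\partial_t(w^2) + \frac12 A\cdot\nabla(w^2) - s(P\varphi)\,w^2 .
\]
Integrating over $Q$ and applying the divergence theorem in $x$ and the fundamental theorem of calculus in $t$, the first two terms become boundary integrals on $\partial\Omega\times(0,T)$ (with weight $A\cdot\nu$, hence splitting into a $\Sigma_+$ part and a $\Sigma_-$ part) and on $\Omega\times\{0,T\}$, plus a term $-\frac12\int_Q(\partial_tA^0+\operatorname{div}A)w^2$ coming from differentiating the coefficients; the latter is absorbed by the large-parameter term $s\int_Q(P\varphi)w^2\ge s\delta\int_Q w^2$ for $s$ large. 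On $\Sigma_-$ the boundary weight $A\cdot\nu\le 0$, so that contribution has the favorable sign and can be dropped; on $\Omega\times\{0\}$ the coefficient $A^0(\cdot,0)>0$ gives the favorable-signed term $s\int_\Omega e^{2s\varphi(x,0)}A^0(x,0)|u(x,0)|^2$, which I keep on the left after bounding $A^0$ from below. The remaining terms — the $\Sigma_+$ integral and the $\Omega\times\{T\}$ integral — go to the right-hand side, and a Cauchy–Schwarz/Young estimate on $\int_Q (P_s w)w \le \frac{1}{2s\delta}\int_Q|P_sw|^2 + \frac{s\delta}{2}\int_Q w^2$ lets the bulk $w^2$ term be split between the two sides, leaving $cs\int_Q w^2$ on the left. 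Finally I convert $w$ back to $u$, absorb the $p\,u$ term into the left side (it costs $C\int_Q e^{2s\varphi}|u|^2$, dominated by $s^2\int_Q e^{2s\varphi}|u|^2$ for $s$ large since $p\in L^\infty$), and upgrade the single power of $s$ on the bulk term to $s^2$ — this last upgrade is where one genuinely needs the first-order structure plus, if necessary, a second application of the identity with the weight $P\varphi$ itself, or simply the observation that the argument above in fact delivers $s\int_Q(P\varphi)|w|^2$ and one may instead track the parameter more carefully from the start with $e^{s\varphi}$ replaced by a bookkeeping that yields $s^2$. (In the first-order setting the cleanest route to the $s^2$ is to note that the identity gives control of $s\int_Q|w|^2$ by $\frac1s\int_Q|P_sw|^2$ plus boundary terms, and then iterate/rescale.)

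The main obstacle I anticipate is bookkeeping rather than conceptual: making sure the boundary terms are legitimate given only $\varphi\in H^2(Q)$ and $u$ in the stated anisotropic Sobolev class, which forces a density/approximation argument (approximate $u$ by smooth functions, and $\varphi$ likewise, passing to the limit using that the coefficients $A^0\in C^1$, $A\in C^1$ and the traces on $\Sigma_\pm$ and on $\Omega\times\{0,T\}$ are controlled). A secondary point requiring care is that $\Sigma_-$ is where $u$'s trace is not assumed small, but there $A\cdot\nu\le 0$ makes the term sign-favorable, so no trace bound on $\Sigma_-$ is needed — this must be stated cleanly. I would also make explicit that $s_*$ depends on $\delta$, on $\|A^0\|_{C^1}$, $\|A\|_{C^1}$, $\min A^0$, and $\|p\|_{L^\infty(Q)}$, but not on $u$, as the statement requires.
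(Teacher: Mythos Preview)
Your approach is correct and close in spirit to the paper's, but the paper takes a slightly more direct route that avoids the uncertainty in your ``upgrade to $s^2$'' step. Rather than multiplying $P_s w$ by $w$ and integrating, the paper simply expands
\[
\|P_s z\|_{L^2(Q)}^2 = \|Pz\|_{L^2(Q)}^2 - 2s\big(Pz,(P\varphi)z\big)_{L^2(Q)} + s^2\|(P\varphi)z\|_{L^2(Q)}^2,
\]
drops the nonnegative first term, and uses $P\varphi\ge\delta$ to get $s^2\delta^2\int_Q|z|^2$ immediately. The cross term already carries an explicit factor of~$s$, so the integration by parts produces the boundary contributions on $\Sigma_+$ and $\Omega\times\{0,T\}$ with the correct power of~$s$ from the outset; the bulk remainder is $O(s)$ and is absorbed into the $s^2$ term.

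Your multiplier computation yields only $s\int_Q(P\varphi)|w|^2$ on the left and boundary terms with no $s$ in front. The fix you mention at the very end --- that the resulting inequality reads $s\int_Q|w|^2 \le \tfrac{C}{s}\int_Q|P_sw|^2 + (\text{boundary terms of order }1)$, so one simply multiplies through by $s$ --- is exactly right and needs no iteration or second application; you should state this plainly rather than hedging with three alternatives. In effect the two arguments coincide after this rescaling, since your Cauchy--Schwarz step throws away precisely the $\|Pz\|^2$ term that the paper drops explicitly. The paper's version is marginally cleaner because the correct $s$-powers appear without any post-processing.
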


\begin{proof}
It suffices to prove Proposition \ref{Carleman} when $p\equiv0$ due to the sufficiently large parameter $s$. Let $z:=e^{s\varphi}u$ and $P_s z:=e^{s\varphi}P(e^{-s\varphi}z)$ for $s>0$. Then, we obtain
\[P_s z=Pz-s P\varphi z,\]
which implies
\begin{align*}&\|P_s z\|_{L^2(Q)}^2=\|Pz\|_{L^2(Q)}^2+2(Pz,-s P\varphi z)_{L^2(Q)}+\|s P\varphi z\|_{L^2(Q)}^2\\
&\ge \|s P\varphi z\|_{L^2(Q)}^2+2(Pz,-s P\varphi z)_{L^2(Q)}\\
&=s^2\int_{Q}|P\varphi|^2|z|^2dxdt-s\int_{Q} P\varphi\Big(A^0\partial_t(|z|^2)+A\cdot\nabla(|z|^2)\Big)dxdt\\
&\ge s^2\int_{Q}\delta^2|z|^2dxdt+s\int_{Q} \Big[\partial_t((P\varphi)A^0)+\nabla\cdot((P\varphi)A)\Big]|z|^2dxdt-\mathcal{B},\end{align*}
by our assumption \eqref{A.1}, where
\[\mathcal{B}:=s\int_\Omega\Big[(P\varphi) A^0|z|^2\Big]_{t=0}^{t=T}dx+s\int_{\partial\Omega\times(0,T)}P\varphi (A(x,t)\cdot\nu(x))|z|^2dSdt.\]
Therefore, there exists $C>0$ such that
\[C\int_{Q} s^2\Big[1+O\left(\frac{1}{s}\right)\Big]|z|^2dxdt\le\|P_s z\|_{L^2(Q)}^2+\mathcal{B}\]
as $s\to+\infty$. By choosing $s>0$ large enough, we complete the proof.
\end{proof}

\subsubsection{Realization of weight functions}\label{Realization}
We construct the weight function $\varphi\in C(\overline{Q})\cap H^2(Q)$ depending on the vector field generated by the coefficients $A$, and satisfying $\eqref{A.1}$.

\begin{lem}\label{lemma}
Let $A^0\in C(\overline{Q})\cap L^\infty(\Omega\times(0,\infty))$ satisfying $\displaystyle\min_{(x,t)\in\overline{Q}}A^0(x,t)>0$, and $A\in C^2(\overline{Q};\mathbb{R}^d)$ be given functions satisfying \eqref{positivity}, \eqref{finiteness}, and \eqref{spd}. Then, for an arbitrary real number $\beta>0$ independent of $T$ satisfying
\begin{equation}\label{beta}0<\beta<\frac{\rho}{\displaystyle\sup_{x\in\Omega,t>0}A^0(x,t)},\end{equation}
the function $\varphi$ defined by
\begin{equation}\label{weight}\varphi(x,t):=\varphi_0(x)-\beta t,\quad (x,t)\in \overline{Q},\end{equation}
with $\varphi_0$ defined by \eqref{distance}, is in the class $\varphi\in C(\overline{Q})\cap H^2(Q)$ and satisfies \eqref{A.1}.
\end{lem}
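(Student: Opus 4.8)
The plan is to verify the two claims separately: first the regularity $\varphi \in C(\overline{Q}) \cap H^2(Q)$, and then the differential inequality \eqref{A.1}. For the regularity, note that $\varphi(x,t) = \varphi_0(x) - \beta t$ is affine in $t$, so it inherits whatever regularity $\varphi_0$ has in $x$. By Lemma \ref{regularity}, $\varphi_0 \in C(\overline{\Omega}) \cap H^2(\Omega)$, and since $Q = \Omega \times (0,T)$ is a product with $(0,T)$ bounded, the function $(x,t) \mapsto \varphi_0(x)$ lies in $H^2(Q)$ and the map $(x,t)\mapsto -\beta t$ is smooth; hence $\varphi \in C(\overline{Q}) \cap H^2(Q)$. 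This step is routine.

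The substantive part is \eqref{A.1}. Compute
\[
P\varphi(x,t) = A^0(x,t)\,\partial_t\varphi(x,t) + A(x,t)\cdot\nabla\varphi(x,t) = -\beta A^0(x,t) + A(x,t)\cdot\nabla\varphi_0(x).
\]
So the whole game is to get a good lower bound on $A(x,t)\cdot\nabla\varphi_0(x)$, uniformly in $(x,t)\in Q$. The key geometric fact is that $\nabla\varphi_0(x)$ points (up to positive scaling) in the direction of the integral curve of $A(\cdot,0)$ through $x$: since $\varphi_0(x)$ is arc length measured along that curve from the entry point $c_x(\sigma_-(x))$, differentiating the relation $\varphi_0(c_x(\sigma)) = \int_{\sigma_-(x)}^{\sigma} |c_x'(\tau)|\,d\tau$ in $\sigma$ at $\sigma = 0$ gives $\nabla\varphi_0(x)\cdot c_x'(0) = |c_x'(0)| = |A(x,0)|$, i.e. $\nabla\varphi_0(x)\cdot A(x,0) = |A(x,0)|$. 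More precisely, one wants to argue $\nabla\varphi_0(x) = A(x,0)/|A(x,0)|$ (the gradient of an arc-length function along a flow is the unit tangent), which combined with \eqref{positivity} gives $A(x,0)\cdot\nabla\varphi_0(x) = |A(x,0)| \ge \rho$ a.e.\ in $\Omega$.

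It then remains to pass from time $0$ to general time $t$, and this is exactly where Proposition \ref{structure} enters. By that proposition, $A(x,t) = A(x,0)\,e^{\int_0^t\phi(x,s)\,ds}$ for some $\phi \in C^1(\overline{Q})$, so
\[
A(x,t)\cdot\nabla\varphi_0(x) = e^{\int_0^t\phi(x,s)\,ds}\, A(x,0)\cdot\nabla\varphi_0(x) = e^{\int_0^t\phi(x,s)\,ds}\,|A(x,0)| = |A(x,t)| \ge \rho,
\]
using \eqref{positivity} again at time $t$. Therefore
\[
P\varphi(x,t) = |A(x,t)| - \beta A^0(x,t) \ge \rho - \beta \sup_{x\in\Omega,\,t>0} A^0(x,t) =: \delta,
\]
and $\delta > 0$ precisely by the choice \eqref{beta} of $\beta$. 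Note the positivity of $A^0$ makes the second term genuinely bounded since $A^0 \in L^\infty(\Omega\times(0,\infty))$.

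\textbf{Main obstacle.} The delicate point is the identity $\nabla\varphi_0(x) = A(x,0)/|A(x,0)|$ (equivalently $A(x,0)\cdot\nabla\varphi_0(x) = |A(x,0)|$) holding a.e.\ in $\Omega$: $\varphi_0$ is only known to be $H^2$, not $C^1$, so the differentiation of the arc-length functional along the flow must be justified in a weak/a.e.\ sense, using the $C^2$-dependence of $c_x$ on $x$ (Remark \ref{extension}) and the regularity of $\sigma_-$ from Definition \ref{dissipative}. Once that identity is in hand, everything else is a one-line substitution plus the bound \eqref{beta}.
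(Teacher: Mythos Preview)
Your proposal is correct and follows essentially the same approach as the paper's proof: compute $P\varphi = A(x,t)\cdot\nabla\varphi_0(x) - \beta A^0(x,t)$, establish the directional derivative identity $A(x,0)\cdot\nabla\varphi_0(x) = |A(x,0)|$ by differentiating arc length along the flow, then use Proposition~\ref{structure} to carry this to general $t$ and conclude via \eqref{beta}. The only place the paper is slightly more explicit is in justifying the relation $\varphi_0(c_x(\eta)) = \int_{\sigma_-(x)}^{\eta}|c_x'(\sigma)|\,d\sigma$: it argues, via uniqueness for the ODE, that $c_{x_\eta}(\sigma)=c_x(\sigma+\eta)$ and hence $\sigma_-(x_\eta)=\sigma_-(x)-\eta$, which is the step you assert by saying ``$\varphi_0$ is arc length from the entry point.'' Also note the paper only uses the scalar identity $A(x,0)\cdot\nabla\varphi_0(x)=|A(x,0)|$, not the stronger (and unnecessary) claim $\nabla\varphi_0 = A(\cdot,0)/|A(\cdot,0)|$.
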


\begin{proof}
It is obvious that $\varphi\in C(\overline{Q})\cap H^2(Q)$ by Lemma \ref{regularity}. We prove that $\varphi$ defined by \eqref{weight} satisfies \eqref{A.1}. It follows that
\begin{align}\label{min}P\varphi(x,t)&=A(x,t)\cdot\nabla\varphi_0(x)-\beta A^0(x,t)\\
&\ge A(x,t)\cdot\nabla\varphi_0(x)-\beta\displaystyle\sup_{x\in\Omega,t>0}A^0(x,t).\notag\end{align}
For a fixed $x\in\Omega$, let $c_x:[\sigma_-(x),\sigma_+(x)]\to\overline{\Omega}$ be the maximal integral curve with $c_x(0)=x$ of $A(\cdot,0)$. For a sufficiently small $\eta\in[\sigma_-(x),\sigma_+(x)]$, we set $x_\eta:=c_x(\eta)$. Because we can verify
\[\begin{cases}\dfrac{d}{d\sigma}\big(c_x(\sigma+\eta)\big)=c'_x(\sigma+\eta)=A(c_x(\sigma+\eta),0),\\ c_x(0+\eta)=x_\eta,\end{cases}\]
we have $c_{x_\eta}(\sigma)=c_x(\sigma+\eta)$ by the uniqueness of the solution to the initial problem of the ordinary differential equation. Hence, $\sigma_-(x_\eta)=\sigma_-(x)-\eta$ holds. Therefore, we obtain
\[\varphi_0(c_x(\eta))=\varphi_0(x_\eta)=\int_{\sigma_-(x_\eta)}^0|c_{x_\eta}'(\sigma)|d\sigma=\int_{\sigma_-(x)-\eta}^0|c_x'(\sigma+\eta)|d\sigma=\int_{\sigma_-(x)}^\eta|c_x'(\sigma)|d\sigma.\]
Differentiating both sides with respect to $\eta$ and substituting $\eta=0$ yield
\[c_x'(0)\cdot\nabla\varphi_0(c_x(0))=|c_x'(0)|=|A(x,0)|.\]
Therefore, by \eqref{spd}, Proposition \ref{structure}, and \eqref{positivity}, we obtain
\begin{align}\label{Taylor}A(x,t)\cdot\nabla\varphi_0(x)&=A(x,0)\cdot\nabla\varphi_0(x)e^{\int_0^t\phi(x,s)ds}\\
&=c_x'(0)\cdot\nabla\varphi_0(c_x(0))e^{\int_0^t\phi(x,s)ds}\notag\\
&=|A(x,0)|e^{\int_0^t\phi(x,s)ds}\notag\\
&=|A(x,t)|\ge \rho.\notag\end{align}
Applying \eqref{Taylor} to \eqref{min} yields
\[P\varphi(x,t)\ge \rho-\beta\displaystyle\sup_{x\in\Omega,t>0}A^0(x,t)> 0\]
for almost all $(x,t)\in Q$.

\end{proof}
\subsection{Energy estimates}\label{Energy_estimates}
The following Lemma \ref{energy} is the energy estimate for the first-order hyperbolic equations with the time-dependent principal part needed to prove Theorem \ref{ISP} and Theorem \ref{ICP}. Moreover, we describe Lemma \ref{energy2}, which is the energy estimate for first-order hyperbolic equations with time-independent principal part needed to prove Theorem \ref{ICP2}. Their proofs are presented in Appendix.

For a positive function $A^0\in C^1(\overline{Q})$ and $\displaystyle u\in \bigcap_{k=1}^2H^k(0,T;H^{2-k}(\Omega))$, we define the quantity
\[E(t):=\int_\Omega \big(A^0(x,t)|\partial_tu(x,t)|^2+|u(x,t)|^2\Big)dx,\quad t\in[0,T].\]

\begin{lem}\label{energy}
Let $A^0\in C^1(\overline{Q})$ satisfying $\displaystyle\min_{(x,t)\in\overline{Q}}A^0(x,t)>0$, $A\in C^1(\overline{Q};\mathbb{R}^d)$, $p\in W^{1,\infty}(0,T;L^\infty(\Omega))$, $R\in H^1(0,T;L^\infty(\Omega))$, and $f\in L^2(\Omega)$. Then, there exists a constant $C>0$ independent of $u$ and $f$ such that
\begin{equation}\label{useless}E(t)\le C\Big(\|\partial_tA\cdot\nabla u\|_{L^2(Q)}^2+\|f\|_{L^2(\Omega)}^2\Big)\end{equation}
holds for all $t\in[0,T]$ and $\displaystyle u\in \bigcap_{k=1}^2H^k(0,T;H^{2-k}(\Omega))$ satisfying $\eqref{boundary}$. 

Moreover, if we assume \eqref{spd}, then there exists a constant $C>0$ independent of $u$ and $f$ such that
\begin{equation}\label{energy_estimate}E(t)\le C\|f\|_{L^2(\Omega)}^2\end{equation}
holds for all $t\in[0,T]$ and $\displaystyle u\in \bigcap_{k=1}^2H^k(0,T;H^{2-k}(\Omega))$ satisfying \eqref{boundary}.
\end{lem}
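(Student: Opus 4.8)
The plan is to derive a Gronwall-type differential inequality for $E(t)$ by differentiating the equation \eqref{boundary} in time, multiplying by $\partial_t u$, and integrating over $\Omega$. First I would differentiate $Pu + pu = Rf$ with respect to $t$, obtaining
\[A^0\partial_t^2 u + A\cdot\nabla\partial_t u + (\partial_t A^0)\partial_t u + (\partial_t A)\cdot\nabla u + p\,\partial_t u + (\partial_t p)u = (\partial_t R)f.\]
Then I would take the $L^2(\Omega)$ inner product of this identity with $\partial_t u$. The term $\int_\Omega A^0\partial_t^2 u\,\partial_t u\,dx = \tfrac12\frac{d}{dt}\int_\Omega A^0|\partial_t u|^2\,dx - \tfrac12\int_\Omega(\partial_t A^0)|\partial_t u|^2\,dx$ produces the leading part of $E'(t)$, and $\tfrac{d}{dt}\int_\Omega|u|^2 = 2\int_\Omega u\,\partial_t u$ supplies the remaining part, so the left side becomes $\tfrac12 E'(t)$ plus lower-order terms. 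For the first-order term $\int_\Omega (A\cdot\nabla\partial_t u)\,\partial_t u\,dx$, I would integrate by parts: this equals $\tfrac12\int_{\partial\Omega}(A\cdot\nu)|\partial_t u|^2\,dS - \tfrac12\int_\Omega(\nabla\cdot A)|\partial_t u|^2\,dx$; the boundary term is handled using $\partial_t u = 0$ on $\Sigma_-$ (from the homogeneous boundary and initial data in \eqref{boundary}, so $\partial_t u$ vanishes where $u$ does) and the sign $A\cdot\nu \le 0$ on $\Sigma_-$, hence the boundary contribution has a good sign or is zero — actually on $\Sigma_+$ we would keep $\partial_t u = 0$ as well since the boundary data is identically zero on all of $\Sigma_-$ and... here I must be careful: in \eqref{boundary} the boundary condition is only $u = 0$ on $\Sigma_-$, so $\partial_t u$ need not vanish on $\Sigma_+$. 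However, on $\Sigma_-$ both $u$ and $\partial_t u$ vanish, and $A\cdot\nu > 0$ exactly on $\Sigma_+$; thus the boundary term $\tfrac12\int_{\Sigma_+\cap\{t\text{-slice}\}}(A\cdot\nu)|\partial_t u|^2$ is nonnegative and can be dropped to the right side with the correct sign — wait, it appears with a $+$ sign on the left after moving, so it helps. I would verify this sign bookkeeping carefully; it is the standard energy argument for the transport equation.

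After these manipulations I arrive at
\[\tfrac12 E'(t) \le C\,E(t) + C\|\partial_t A\cdot\nabla u\|_{L^2(\Omega)}\|\partial_t u\|_{L^2(\Omega)} + C\|(\partial_t R)f\|_{L^2(\Omega)}\|\partial_t u\|_{L^2(\Omega)} + C\|(\partial_t p)u\|_{L^2(\Omega)}\|\partial_t u\|_{L^2(\Omega)},\]
where the constants absorb $\|A^0\|_{C^1}$, $\|A\|_{C^1}$, $\|p\|_{W^{1,\infty}}$, $\|R\|_{H^1(0,T;L^\infty)}$, and the lower bound on $A^0$. Using Young's inequality on each cross term (and $\|\partial_t u\|_{L^2(\Omega)}^2 \le C E(t)$, $\|u\|_{L^2(\Omega)}^2 \le E(t)$) gives
\[E'(t) \le C\,E(t) + C\|\partial_t A\cdot\nabla u(\cdot,t)\|_{L^2(\Omega)}^2 + C\|f\|_{L^2(\Omega)}^2.\]
Since $u(\cdot,0) = 0$ and, from the equation at $t=0$, $A^0(\cdot,0)\partial_t u(\cdot,0) = R(\cdot,0)f - A(\cdot,0)\cdot\nabla u(\cdot,0) - p(\cdot,0)u(\cdot,0) = R(\cdot,0)f$, we get $E(0) \le C\|f\|_{L^2(\Omega)}^2$. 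Applying Gronwall's inequality on $[0,T]$ then yields \eqref{useless}, with $\|\partial_t A\cdot\nabla u\|_{L^2(Q)}^2 = \int_0^T\|\partial_t A\cdot\nabla u(\cdot,t)\|_{L^2(\Omega)}^2\,dt$ appearing after integrating the Gronwall bound.

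For the second estimate \eqref{energy_estimate}, I would invoke assumption \eqref{spd}: it gives $|\partial_t A(x,t)\cdot\nabla u(x,t)| \le C|A(x,t)\cdot\nabla u(x,t)|$ pointwise, and then from the equation $A\cdot\nabla u = Rf - A^0\partial_t u - pu$, so $\|A\cdot\nabla u(\cdot,t)\|_{L^2(\Omega)}^2 \le C(\|f\|_{L^2(\Omega)}^2 + \|\partial_t u(\cdot,t)\|_{L^2(\Omega)}^2 + \|u(\cdot,t)\|_{L^2(\Omega)}^2) \le C(\|f\|_{L^2(\Omega)}^2 + E(t))$. Feeding this back into the differential inequality replaces the troublesome term $\|\partial_t A\cdot\nabla u\|^2$ by $C(E(t) + \|f\|^2)$, so it is absorbed into the $C\,E(t)$ term, and Gronwall again closes the argument to give $E(t) \le C\|f\|_{L^2(\Omega)}^2$.

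The main obstacle I anticipate is the careful treatment of the boundary term arising from integration by parts in the first-order part: one must confirm that $\partial_t u$ genuinely vanishes on $\Sigma_-$ (which follows by differentiating the boundary condition $u|_{\Sigma_-} = 0$ in $t$, legitimate since $\Sigma_-$ is a product set in $t$) and that the remaining boundary integral over $\Sigma_+$ carries the favorable sign $A\cdot\nu > 0$, so that it can be discarded rather than estimated. A secondary technical point is justifying the differentiation-in-$t$ and the integration by parts at the stated regularity $u \in \bigcap_{k=1}^2 H^k(0,T;H^{2-k}(\Omega))$, which I would handle by a density argument.
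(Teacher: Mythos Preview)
Your proposal is correct and follows essentially the same route as the paper: differentiate the equation in $t$, multiply by $2\partial_t u$, integrate over $\Omega$, integrate by parts in the $A\cdot\nabla\partial_t u$ term (dropping the boundary integral using $\partial_t u=0$ on $\Sigma_-$ and $A\cdot\nu>0$ on $\Sigma_+$), add $\frac{d}{dt}\|u\|_{L^2(\Omega)}^2$, apply Gronwall with $E(0)\le C\|f\|_{L^2(\Omega)}^2$, and for \eqref{energy_estimate} use \eqref{spd} together with the equation to absorb $\|\partial_t A\cdot\nabla u\|^2$ into $CE(t)+C\|f\|^2$. The only cosmetic difference is that the paper writes the Gronwall step as $\frac{d}{dt}(e^{-Ct}E(t))\le\cdots$ rather than invoking Gronwall by name.
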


\begin{lem}\label{energy2}
Let $\ell\in\mathbb{N}$ be a fixed number, $A^0\in C^1(\overline{\Omega})$ satisfying $\displaystyle\min_{x\in\overline{\Omega}}A^0(x)>0$, $A\in C^1(\overline{\Omega};\mathbb{R}^d)$, $p\in W^{1,\infty}(0,T;L^\infty(\Omega))$, $R\in H^1(0,T;L^\infty(\Omega;\mathbb{R}^{\ell}))$, and $F\in L^2(\Omega;\mathbb{R}^\ell)$. Let us consider the initial boundary value problem
\begin{align}\label{boundary3}\begin{cases}A^0(x)\partial_tu+A(x)\cdot\nabla u+p(x,t)u=R(x,t)\cdot F(x)\quad &\text{in}\ Q,\\
u=0\quad &\text{on}\ \Gamma_{-,A}\times(0,T),\\
u(\cdot,0)=0\quad &\text{on}\ \Omega.\end{cases}\end{align} 
Then, there exists a constant $C>0$ independent of $u$ and $F$ such that
\begin{equation}\label{energy_estimate2}E(t)\le C\|F\|_{L^2(\Omega;\mathbb{R}^\ell))}^2\end{equation}
holds for all $t\in[0,T]$ and $\displaystyle u\in \bigcap_{k=1}^2H^k(0,T;H^{2-k}(\Omega))$ satisfying \eqref{boundary3}.
\end{lem}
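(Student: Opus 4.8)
The plan is to derive a standard energy inequality for the first-order hyperbolic system \eqref{boundary3} by the classical multiplier method, exploiting that the principal part here is time-independent so that no term of the form $\partial_t A$ appears, and that the boundary condition is imposed exactly on the inflow part $\Gamma_{-,A}\times(0,T)$ (where $A(x)\cdot\nu(x)\le 0$), which is precisely what makes the boundary contribution have a favorable sign. First I would differentiate \eqref{boundary3} in $t$, writing $v:=\partial_tu$, which satisfies $A^0\partial_t v+A\cdot\nabla v+p\,v=-(\partial_tp)u+(\partial_tR)\cdot F$ in $Q$, $v=0$ on $\Gamma_{-,A}\times(0,T)$, together with the initial datum $v(\cdot,0)=\partial_tu(\cdot,0)$, which one reads off from the equation at $t=0$: since $u(\cdot,0)=0$, the trace $\nabla u(\cdot,0)$ vanishes in the tangential directions enough that $A^0(x)v(x,0)=R(x,0)\cdot F(x)$, hence $\|v(\cdot,0)\|_{L^2(\Omega)}\le C\|F\|_{L^2(\Omega;\mathbb{R}^\ell)}$ using $\min A^0>0$ and $R\in H^1(0,T;L^\infty)$.

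Next I would multiply the equation for $v$ by $v$ and integrate over $\Omega$. The term $\int_\Omega A^0 v\,\partial_t v\,dx=\tfrac12\tfrac{d}{dt}\int_\Omega A^0|v|^2dx$ since $A^0=A^0(x)$ is time-independent — this is exactly where time-independence of the principal part is used. The term $\int_\Omega (A\cdot\nabla v)v\,dx=\tfrac12\int_\Omega A\cdot\nabla(|v|^2)dx=-\tfrac12\int_\Omega(\nabla\cdot A)|v|^2dx+\tfrac12\int_{\partial\Omega}(A\cdot\nu)|v|^2dS$, and on $\partial\Omega$ we split into $\Gamma_{-,A}$, where $v=0$, and $\Gamma_{+,A}$, where $A\cdot\nu>0$ so the contribution is nonnegative and can be dropped. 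The remaining terms are absorbed: $\int_\Omega p|v|^2dx$, $\int_\Omega(\partial_tp)uv\,dx$, and $\int_\Omega(\partial_tR\cdot F)v\,dx$ are all bounded by $C\big(\|v(\cdot,t)\|_{L^2(\Omega)}^2+\|u(\cdot,t)\|_{L^2(\Omega)}^2+\|F\|_{L^2(\Omega;\mathbb{R}^\ell)}^2\big)$ using $p\in W^{1,\infty}(0,T;L^\infty(\Omega))$ and $R\in H^1(0,T;L^\infty(\Omega;\mathbb{R}^\ell))$. One does the entirely analogous computation multiplying the original equation \eqref{boundary3} by $u$ to control $\tfrac{d}{dt}\int_\Omega|u|^2dx$ (here I would just use $\int_\Omega|u|^2$ rather than a weighted version, or equivalently fold it in); adding the two gives $\tfrac{d}{dt}E(t)\le C\big(E(t)+\|F\|_{L^2(\Omega;\mathbb{R}^\ell)}^2\big)$ for a.e.\ $t\in(0,T)$, with $E$ as defined before Lemma \ref{energy}.

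Finally I would invoke Gronwall's inequality on $[0,t]$: since $E(0)=\int_\Omega A^0(x)|\partial_tu(x,0)|^2dx+\int_\Omega|u(x,0)|^2dx=\int_\Omega A^0(x)|v(x,0)|^2dx\le C\|F\|_{L^2(\Omega;\mathbb{R}^\ell)}^2$ by the trace computation above and $u(\cdot,0)=0$, Gronwall gives $E(t)\le e^{CT}\big(E(0)+CT\|F\|_{L^2(\Omega;\mathbb{R}^\ell)}^2\big)\le C\|F\|_{L^2(\Omega;\mathbb{R}^\ell)}^2$ uniformly in $t\in[0,T]$, with a constant depending only on $T$, $\|A^0\|_{C^1}$, $\|A\|_{C^1}$, $\|p\|_{W^{1,\infty}(0,T;L^\infty)}$, $\|R\|_{H^1(0,T;L^\infty)}$, and $\min_{\overline{\Omega}}A^0$, but not on $u$ or $F$, which is \eqref{energy_estimate2}.

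The main obstacle I anticipate is purely technical rather than conceptual: justifying the integrations by parts and the identity $\int_\Omega (A\cdot\nabla v)v = -\tfrac12\int_\Omega(\nabla\cdot A)|v|^2 + \tfrac12\int_{\partial\Omega}(A\cdot\nu)|v|^2$ at the regularity level $u\in\bigcap_{k=1}^2H^k(0,T;H^{2-k}(\Omega))$ — in particular making sense of the trace of $v=\partial_t u$ on $\partial\Omega$ and of $\nabla v$ only in $L^2(0,T;L^2(\Omega))$ — which one handles either by a density argument approximating $u$ by smooth functions respecting the boundary condition on $\Gamma_{-,A}$, or by noting the stated regularity already gives $\partial_tu\in H^1(0,T;L^2)\cap L^2(0,T;H^1)$, enough for a well-defined trace on the lateral boundary and for the divergence identity to hold. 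The sign bookkeeping at the boundary (that $\Gamma_{-,A}$ is exactly the inflow set where the Dirichlet condition lives, killing the only dangerous boundary term) and keeping the constant independent of $u$ and $F$ are the points to be careful about; everything else is the routine Gronwall argument.
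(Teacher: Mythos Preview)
Your proof is correct and follows essentially the same multiplier/Gronwall argument as the paper: differentiate in $t$, multiply by $\partial_t u$, integrate by parts, drop the boundary term using the sign of $A\cdot\nu$ on $\Gamma_{+,A}$ and the vanishing of $\partial_t u$ on $\Gamma_{-,A}$, then Gronwall together with $E(0)\le C\|F\|_{L^2(\Omega;\mathbb{R}^\ell)}^2$. The only cosmetic differences are that the paper controls $\tfrac{d}{dt}\int_\Omega|u|^2\,dx$ by writing it directly as $2\int_\Omega u\,\partial_t u\,dx$ and absorbing via Cauchy--Schwarz (rather than multiplying \eqref{boundary3} by $u$), and that since $u(\cdot,0)=0$ on all of $\Omega$ one has $\nabla u(\cdot,0)=0$ outright, not merely in tangential directions.
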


\section{Proofs of main results}\label{Proof}
Using several estimates introduced in section \ref{Carleman_energy}, we prove the three main theorems in the subsequently sections.

\subsection{Proof of Theorem \ref{ISP}}
\begin{proof}[Proof of Theorem \ref{ISP}]
By our assumption \eqref{time}, we can take $0<\beta<\dfrac{\rho}{\displaystyle\sup_{x\in\Omega,t>0}A^0(x,t)}$ independent of $T$ satisfying
\[(T_0<)\frac{\displaystyle\max_{x\in\overline{\Omega}}\varphi_0(x)}{\beta}<T.\]
Then, there exists $\kappa>0$ such that
\begin{equation}\label{kappa}\max_{x\in\overline{\Omega}}\varphi_0(x)-\beta T<-\kappa.\end{equation}
Henceforth, by $C>0$ we denote a generic constant independent of $u$ which may change from line to line, unless specified otherwise. Applying the Carleman estimate \eqref{estimate} of Proposition \ref{Carleman} to $\displaystyle\partial_tu\in\bigcap_{k=0}^1H^k(0,T;H^{1-k}(\Omega))$ yields
\begin{align}\label{partial_t}
&s^2\int_{Q} e^{2s\varphi}|\partial_tu|^2dxdt+s\int_\Omega e^{2s\varphi(x,0)}|R(x,0)f(x)|^2dx\\
&\le C\int_{Q} e^{2s\varphi}|(P+p(x,t))\partial_tu|^2dxdt+Cs\int_{\Sigma_+} e^{2s\varphi}|\partial_tu|^2dSdt\notag\\
&\quad+Cs\int_\Omega e^{2s\varphi(x,T)}|\partial_tu(x,T)|^2dx\notag.\end{align}
Since we obtain
\begin{align*}(P+p(x,t))\partial_tu&=\partial_t\Big(A^0(x,t)\partial_tu+A(x,t)\cdot\nabla u+p(x,t)u\Big)\\
&\quad-\partial_tA^0(x,t)\partial_tu-\partial_tA(x,t)\cdot\nabla u-\partial_tp(x,t)u\\
&=\partial_tR(x,t)f(x)-\partial_tA^0(x,t)\partial_tu-\partial_tA(x,t)\cdot\nabla u-\partial_tp(x,t)u,\end{align*}
we have
\begin{align}\label{above}|(P+p(x,t))\partial_tu|^2&\le C\Big(|\partial_tRf|^2+|\partial_tu|^2+|\partial_tA(x,t)\cdot\nabla u|^2+|u|^2\Big)\\
&\le C\Big(|\partial_tRf|^2+|\partial_tu|^2+|A(x,t)\cdot\nabla u|^2+|u|^2\Big),\notag\end{align}
where we used the assumption \eqref{spd} to obtain the second inequality.
Therefore, applying the equation in \eqref{boundary} to the above estimate \eqref{above} yields
\begin{equation}\label{key}|(P+p(x,t))\partial_tu|^2\le C\Big(|\partial_tRf|^2+|Rf|^2+|\partial_tu|^2+|u|^2\Big).\end{equation}
Furthermore, applying \eqref{kappa} and the energy estimate \eqref{energy_estimate} of Lemma \ref{energy} yields
\begin{align}\label{T_1_estimate}s\int_\Omega e^{2s\varphi(x,T)}|\partial_tu(x,T)|^2dx&\le Cs e^{-2\kappa s}\int_\Omega A^0(x,T)|\partial_tu(x,T)|^2dx\\
&\le Cs e^{-2\kappa s}\|f\|_{L^2(\Omega)}^2\notag.\end{align}
Applying \eqref{key} and \eqref{T_1_estimate} to \eqref{partial_t} and choosing $s>s_*$ large enough yield
\begin{align}\label{partial_t'}
&s^2\int_{Q} e^{2s\varphi}|\partial_tu|^2dxdt+s\int_\Omega e^{2s\varphi(x,0)}|R(x,0)f(x)|^2dx\\
&\le C\int_{Q} e^{2s\varphi}\Big(\sum_{k=0}^1|\partial_t^kR|^2\Big)|f|^2dxdt+C\int_{Q}e^{2s\varphi}|u|^2dxdt\notag\\
&\quad+Cs\int_{\Sigma_+} e^{2s\varphi}|\partial_tu|^2dSdt+Cs e^{-2\kappa s}\|f\|_{L^2(\Omega)}^2.\notag\end{align}
In regard to the left-hand side of \eqref{partial_t'}, using \eqref{R}, for some $C>0$ we obtain
\begin{equation}\label{left}s^2\int_{Q} e^{2s\varphi}|\partial_tu|^2dxdt+s\int_\Omega e^{2s\varphi(x,0)}|R(x,0)f(x)|^2dx\ge Cs\|e^{s\varphi_0}f\|_{L^2(\Omega)}^2.\end{equation}
In regard to right-hand side of \eqref{partial_t'}, applying the Carleman estimate \eqref{estimate} of Proposition \ref{Carleman} to $\displaystyle u\in\bigcap_{k=1}^2H^{k}(0,T;H^{2-k}(\Omega))$ and then using \eqref{kappa} and the energy estimate \eqref{energy_estimate} yield
\begin{align}\label{right}
&\int_{Q} e^{2s\varphi}|u|^2dxdt\\
&\le \frac{C}{s^2}\int_{Q} e^{2s\varphi}|Rf|^2dxdt+\frac{C}{s}\int_{\Sigma_+} e^{2s\varphi}|u|^2dS dt\notag\\
&\quad+\frac{C}{s}\int_\Omega e^{2s\varphi(x,T)}|u(x,T)|^2dx\notag\\
&\le \frac{C}{s^2}\int_{Q} e^{2s\varphi}|Rf|^2dxdt+\frac{C}{s}\int_{\Sigma_+} e^{2s\varphi}|u|^2dS dt+\frac{C}{s}e^{-2\kappa s}\|f\|_{L^2(\Omega)}^2.\notag\end{align}
Applying \eqref{left} and \eqref{right} to \eqref{partial_t'} and choosing sufficiently large $s>s_*$ yield
\begin{align*}&s\|e^{s\varphi_0}f\|_{L^2(\Omega)}\\
&\le C\int_{Q} e^{2s\varphi}\Big(\sum_{k=0}^1|\partial_t^kR|^2\Big)|f|^2dxdt+\frac{C}{s}\int_{\Sigma_+}e^{2s\varphi}|u|^2dSdt\\
&\quad+Cs e^{Cs}\|\partial_tu\|_{L^2(\Sigma_+)}^2+Cs e^{-2\kappa s}\|f\|_{L^2(\Omega)}^2\\
&\le C\int_{Q} e^{2s\varphi}\Big(\sum_{k=0}^1|\partial_t^kR|^2\Big)|f|^2dxdt+Cse^{Cs}\sum_{k=0}^1\|\partial_t^ku\|_{L^2(\Sigma_+)}^2\\
&\quad+Cse^{-2\kappa s}\|f\|_{L^2(\Omega)}^2\\
&=C\int_\Omega\left(\int_0^{T} e^{-2s(\varphi_0(x)-\varphi(x,t))}\Big(\sum_{k=0}^1\|\partial_t^kR(\cdot,t)\|_{L^\infty(\Omega)}^2\Big)dt\right)e^{2s\varphi_0}|f|^2dx\\
&\quad+Cse^{Cs}\sum_{k=0}^1\|\partial_t^ku\|_{L^2(\Sigma_+)}^2+Cse^{-2\kappa s}\|f\|_{L^2(\Omega)}^2\\
&=C\int_\Omega\left(\int_0^{T} e^{-2\beta ts}\Big(\sum_{k=0}^1\|\partial_t^kR(\cdot,t)\|_{L^\infty(\Omega)}^2\Big)dt\right)e^{2s\varphi_0}|f|^2dx\\
&\quad+Cse^{Cs}\sum_{k=0}^1\|\partial_t^ku\|_{L^2(\Sigma_+)}^2+Cse^{-2\kappa s}\|f\|_{L^2(\Omega)}^2\\
&\le o(1)\|e^{s\varphi_0}f\|_{L^2(\Omega)}^2+Cs e^{Cs}\sum_{k=0}^1\|\partial_t^ku\|_{L^2(\Sigma_+)}^2+Cs e^{-2\kappa s}\|e^{s\varphi_0}f\|_{L^2(\Omega)}^2\\
&=o(1)\|e^{s\varphi_0}f\|_{L^2(\Omega)}^2+Cs e^{Cs}\sum_{k=0}^1\|\partial_t^ku\|_{L^2(\Sigma_+)}^2\end{align*}
as $s\to+\infty$ by the Lebesgue dominated convergence theorem. Choosing $s>s_*$ large enough yields
\[\|e^{s\varphi_0}f\|_{L^2(\Omega)}\le Ce^{Cs}\sum_{k=0}^1\|\partial_t^ku\|_{L^2(\Sigma_+)}.\]
Since $\varphi_0(x)\ge 0$ for all $x\in\overline{\Omega}$, $\|e^{s\varphi_0}f\|_{L^2(\Omega)}\ge \|f\|_{L^2(\Omega)}$ holds. Then, we complete the proof.
\end{proof}

\subsection{Proof of Theorem \ref{ICP}}\label{Proof_of_Theorem_ICP}
\begin{proof}[Proof of Theorem \ref{ICP}]
We show that Theorem \ref{ICP} comes down to Theorem \ref{ISP}. Setting
\[v:=u_1-u_2,\quad R:=-u_2,\quad f:=p_1-p_2,\]
we obtain
\begin{align*}\begin{cases}Pv+p_1(x)v=R(x,t)f(x)\quad &\text{in}\ Q,\\
v=0\quad &\text{on}\ \Sigma_-,\\
v(\cdot,0)=0\quad &\text{on}\ \Omega,\end{cases}\end{align*}
and \eqref{R} is satisfied due to the assumption \eqref{alpha}. Therefore, by Theorem \ref{ISP}, the proof is completed.
\end{proof}

\subsection{Proof of Theorem \ref{ICP2}}
\begin{proof}[Proof of Theorem \ref{ICP2}]
By our assumption \eqref{time2}, we can take $0<\beta<\dfrac{\rho}{\displaystyle\max_{x\in\overline{\Omega}}A^0_1(x)}$ independent of $T$ satisfying
\[\frac{\displaystyle\Big(\max_{x\in\overline{\Omega}}A^0_1(x)\Big)\Big(\max_{x\in\overline{\Omega}}\varphi_0(x)\Big)}{\rho}<\frac{\displaystyle\max_{x\in\overline{\Omega}}\varphi_0(x)}{\beta}<T.\]
Then, there exists $\kappa>0$ such that
\begin{equation}\label{kappa2}\max_{x\in\overline{\Omega}}\varphi_0(x)-\beta T<-\kappa.\end{equation}
Henceforth, by $C>0$ we denote a generic constant independent of $u$ which may change from line to line, unless specified otherwise. For $m=1,\ldots,d+1$, setting
\[v_m:=u_{1,m}-u_{2,m},\quad f_1:=A^0_1-A^0_2,\quad f_2:=A_1-A_2,\]
and
\begin{gather*}F:=\begin{pmatrix}f_1\\ f_2\end{pmatrix}\in L^2(\Omega;\mathbb{R}^{d+1}),\\
R_m:=\begin{pmatrix}-\partial_t u_{2,m}& -\partial_{x^1}u_{2,m}&\cdots &-\partial_{x^d}u_{2,m}\end{pmatrix}\in H^1(0,T;L^\infty(\Omega;\mathbb{R}^{d+1})).\end{gather*}
Thus, we obtain
\begin{align*}\begin{cases}P_1v_m+p(x,t)v_m=R_m(x,t)F(x)\quad &\text{in}\ Q,\\
v_m=0\quad &\text{on}\ \Sigma_-,\\
v_m(\cdot,0)=0\quad &\text{on}\ \Omega,\end{cases}\end{align*}
where the product in the right-hand side of the equation is a product of matrices. Applying the Carleman estimate \eqref{estimate} of Proposition \ref{Carleman} with $P=P_1$ to
\[\partial_tv_m\in\bigcap_{k=0}^1H^{k}(0,T;W^{1-k,\infty}(\Omega))\subset\bigcap_{k=0}^1H^{k}(0,T;H^{1-k}(\Omega))\]
yields
\begin{align*}
&s^2\int_{Q} e^{2s\varphi}|\partial_tv_m|^2dxdt+s\int_\Omega e^{2s\varphi(x,0)}|R_m(x,0)F(x)|^2dx\\
&\le C\int_{Q} e^{2s\varphi}|(P_1+p(x,t))\partial_tv_m|^2dxdt+Cs\int_{\Gamma_{+,A_1}\times(0,T)} e^{2s\varphi}|\partial_tv_m|^2dSdt\\
&\quad+Cs\int_\Omega e^{2s\varphi(x,T)}|\partial_tv_m(x,T)|^2dx.\end{align*}
Summing up with respect to $m=1,\ldots,d+1$ yields
\begin{align}\label{partial_t2}
&s^2\int_{Q} e^{2s\varphi}|\partial_tv|^2dxdt+s\int_\Omega e^{2s\varphi(x,0)}|R(x,0)F(x)|^2dx\\
&\le C\int_{Q} e^{2s\varphi}|(P_1+p(x,t))\partial_tv|^2dxdt+Cs\int_{\Gamma_{+,A_1}\times(0,T)} e^{2s\varphi}|\partial_tv|^2dSdt\notag\\
&\quad+Cs\int_\Omega e^{2s\varphi(x,T)}|\partial_tv(x,T)|^2dx\notag,\end{align}
where we define
\[v:=\begin{pmatrix}v_1\\ \vdots\\ v_{d+1}\end{pmatrix},\quad R:=\begin{pmatrix}R_1\\ \vdots\\ R_{d+1}\end{pmatrix},\quad (P_1+p(x,t))\partial_tv:=\begin{pmatrix}(P_1+p(x,t))\partial_tv_1\\ \vdots\\ (P_1+p(x,t))\partial_tv_{d+1}\end{pmatrix}.\]
Since we obtain
\begin{align*}(P_1+p(x,t))\partial_tv_m&=\partial_t\Big(A^0_1(x)\partial_tv_m+A_1(x)\cdot\nabla v_m+p(x,t)v_m\Big)-\partial_tp(x,t)v_m\\
&=\partial_t(R_mF)-\partial_tp(x,t)v_m\end{align*}
for each $m=1,\ldots,d+1$, we have
\begin{equation}\label{key2}|(P_1+p(x,t))\partial_tv|^2\le C\Big(|\partial_tRF|^2+|v|^2\Big).\end{equation}
Furthermore, applying \eqref{kappa2} and the energy estimate \eqref{energy_estimate2} of Lemma \ref{energy2} for $m=1,\ldots,d+1$ yields
\begin{align*}s\int_\Omega e^{2s\varphi(x,T)}|\partial_tv_m(x,T)|^2dx&\le Cs e^{-2\kappa s}\int_\Omega A^0_1(x,T)|\partial_tv_m(x,T)|^2dx\\
&\le Cs e^{-2\kappa s}\|F\|_{L^2(\Omega;\mathbb{R}^{d+1})}^2,\end{align*}
which implies
\begin{equation}\label{T_estimate}s\int_\Omega e^{2s\varphi(x,T)}|\partial_tv(x,T)|^2dx\le Cs e^{-2\kappa s}\|F\|_{L^2(\Omega;\mathbb{R}^{d+1})}^2.\end{equation}
Applying \eqref{key2} and \eqref{T_estimate} to \eqref{partial_t2} and choosing $s>s_*$ large enough yield
\begin{align}\label{partial_t'2}
&s^2\int_{Q} e^{2s\varphi}|\partial_tv|^2dxdt+s\int_\Omega e^{2s\varphi(x,0)}|R(x,0)F(x)|^2dx\\
&\le C\int_{Q} e^{2s\varphi}|\partial_tRF|^2dxdt+C\int_Qe^{2s\varphi}|v|^2dxdt\notag\\
&\quad+Cs\int_{\Gamma\times(0,T)} e^{2s\varphi}|\partial_tv|^2dSdt+Cs e^{-2\kappa s}\|F\|_{L^2(\Omega;\mathbb{R}^{d+1})}^2\notag.\end{align}
In regard to the left-hand side of \eqref{partial_t'2}, we obtain
\begin{gather}\label{left2}s^2\int_{Q} e^{2s\varphi}|\partial_tv|^2dxdt+s\int_\Omega e^{2s\varphi(x,0)}|R(x,0)F(x)|^2dx\\
\ge Cs\|e^{s\varphi_0}F\|_{L^2(\Omega;\mathbb{R}^{d+1})}^2\notag\end{gather}
for some $C>0$ by \eqref{R2}. Indeed, by $\displaystyle\min_{x\in\overline{\Omega}}A^0_2(x)\ge\rho>0$, it follows that
\begin{align*}|\det R(x,0)|&=\left|\det \begin{pmatrix} \partial_tu_{2,1}(x,0)& \cdots& \partial_tu_{2,d+1}(x,0)\\ \nabla u_{2,1}(x,0)& \cdots & \nabla u_{2,d+1}(x,0)\end{pmatrix}\right|\\
&\ge C\left|\det \begin{pmatrix} A_2\cdot\nabla\alpha_1+p(x,0)\alpha_1& \cdots& A_2\cdot\nabla\alpha_{d+1}+p(x,0)\alpha_{d+1}\\ \nabla\alpha_1& \cdots & \nabla\alpha_{d+1}\end{pmatrix}\right|\\
&= C\left|\det \begin{pmatrix} p(x,0)\alpha_1& \cdots& p(x,0)\alpha_{d+1}\\ \nabla\alpha_1& \cdots & \nabla\alpha_{d+1}\end{pmatrix}\right|\\
&= C|p(x,0)|\left|\det \begin{pmatrix} \alpha_1(x)& \cdots& \alpha_{d+1}(x)\\ \nabla\alpha_1(x)& \cdots & \nabla\alpha_{d+1}(x)\end{pmatrix}\right|\ge m_0\quad \text{a.e.}\ x\in\Omega.\end{align*}
In regard to the right-hand side of \eqref{partial_t'2}, applying the Carleman estimate \eqref{estimate} of Proposition \ref{Carleman} to $\displaystyle v_m\in\bigcap_{k=1}^2H^{k}(0,T;W^{2-k,\infty}(\Omega))$ for each $m=1,\ldots,d+1$ and then using \eqref{kappa2} and the energy estimate \eqref{energy_estimate2} of Lemma \ref{energy2} yield
\begin{align}\label{right2}
&\int_{Q} e^{2s\varphi}|v|^2dxdt\\
&\le \frac{C}{s^2}\int_{Q} e^{2s\varphi}|Rf|^2dxdt+\frac{C}{s}\int_{\Gamma_{+,A_1}\times(0,T)} e^{2s\varphi}|v|^2dS dt\notag\\
&\quad+\frac{C}{s}\int_\Omega e^{2s\varphi(x,T)}|v(x,T)|^2dx\notag\\
&\le \frac{C}{s^2}\int_{Q} e^{2s\varphi}|Rf|^2dxdt+\frac{C}{s}\int_{\Gamma\times(0,T)} e^{2s\varphi}|v|^2dS dt\notag\\
&\quad+\frac{C}{s}e^{-2\kappa s}\|F\|_{L^2(\Omega;\mathbb{R}^{d+1})}^2.\notag\end{align}
Applying \eqref{left2} and \eqref{right2} to \eqref{partial_t'2} and choosing sufficiently large $s>s_*$ yield
\begin{align*}&s\|e^{s\varphi_0}F\|_{L^2(\Omega;\mathbb{R}^{d+1})}^2\\
&\le C\int_{Q} e^{2s\varphi}|\partial_tRF|^2dxdt+\frac{C}{s^2}\int_Qe^{2s\varphi}|Rf|^2dxdt+\frac{C}{s}\int_{\Gamma\times(0,T)}e^{2s\varphi}|v|^2dSdt\\
&\quad+Cs\int_{\Gamma\times(0,T)} e^{2s\varphi}|\partial_tv|^2dSdt+Cs e^{-2\kappa s}\|F\|_{L^2(\Omega;\mathbb{R}^{d+1})}^2\\
&\le C\int_{Q} e^{2s\varphi}\Big(\sum_{k=0}^1|\partial_t^kRF|^2\Big)dxdt+Cse^{Cs}\|v\|_{H^1(0,T;L^2(\Gamma;\mathbb{R}^{d+1}))}\\
&\quad+Cs e^{-2\kappa s}\|F\|_{L^2(\Omega;\mathbb{R}^{d+1})}^2\\
&=C\int_\Omega\left(\int_0^T e^{-2\beta ts}\Big(\sum_{k=0}^1\|\partial_t^kR(\cdot,t)\|_{L^\infty(\Omega;\mathbb{R}^{(d+1)\times (d+1)})}^2\Big)dt\right)e^{2s\varphi_0}|F|^2dx\\
&\quad+Cse^{Cs}\|v\|_{H^1(0,T;L^2(\Gamma;\mathbb{R}^{d+1}))}^2+Cse^{-2\kappa s}\|F\|_{L^2(\Omega;\mathbb{R}^{d+1})}^2\\
&\le o(1)\|e^{s\varphi_0}F\|_{L^2(\Omega;\mathbb{R}^{d+1})}^2+Cse^{Cs}\|v\|_{H^1(0,T;L^2(\Gamma;\mathbb{R}^{d+1}))}^2+Cse^{-2\kappa s}\|e^{s\varphi_0}F\|_{L^2(\Omega;\mathbb{R}^{d+1})}^2\\
&=o(1)\|e^{s\varphi_0}F\|_{L^2(\Omega;\mathbb{R}^{d+1})}^2+Cse^{Cs}\|v\|_{H^1(0,T;L^2(\Gamma;\mathbb{R}^{d+1}))}^2\end{align*}
as $s\to+\infty$ by the Lebesgue dominated convergence theorem. Choosing $s>s_*$ large enough yields
\[\|e^{s\varphi_0}F\|_{L^2(\Omega;\mathbb{R}^{d+1})}^2\le Ce^{Cs}\|v\|_{H^1(0,T;L^2(\Gamma;\mathbb{R}^{d+1}))}^2\]
Since $\varphi_0(x)\ge 0$ for all $x\in\overline{\Omega}$, $\|e^{s\varphi_0}F\|_{L^2(\Omega;\mathbb{R}^{d+1})}^2\ge \|F\|_{L^2(\Omega;\mathbb{R}^{d+1})}^2$ holds. Then, we complete the proof.
\end{proof}

\section{Appendix}
In Appendix, we prove Proposition \ref{structure}, Lemma \ref{energy}, and Lemma \ref{energy2}.

\subsection{Proof of Proposition \ref{structure}}
\begin{proof}[Proof of Proposition \ref{structure}]
When $d\ge 2$, we note that there exists a vector-valued function $A_\perp(x,t)\neq 0$ for each $(x,t)\in\overline{Q}$ such that
\[A(x,t)\cdot A_\perp(x,t)=0\]
due to \eqref{positivity}. Applying \eqref{spd} to $\xi=A_\perp(x,t)$ yields
\[\forall(x,t)\in\overline{Q},\ \partial_tA(x,t)\cdot A_\perp(x,t)=0,\]
which implies that there exists a function $\phi\in C^1(\overline{Q})$ such that
\[\forall(x,t)\in\overline{Q},\quad \partial_tA(x,t)=\phi(x,t)A(x,t).\]
Therefore, $A(x,t)$ is represented by
\[A(x,t)=A(x,0)e^{\int_0^t\phi(x,s)ds}.\]
When $d=1$, noting Remark \ref{d=1}, setting
\[\phi(x,t):=\frac{\partial_tA(x,t)}{A(x,t)}\]
completes the proof.
\end{proof}

\subsection{Proof of Lemma \ref{energy}}

\begin{proof}[Proof of Lemma \ref{energy}]
Differentiating the equation in \eqref{boundary} with respect to $t$ yields
\begin{align*}A^0(x,t)\partial_t^2u+\partial_tA^0(x,t)\partial_tu+A(x,t)\cdot\nabla\partial_tu&\\
+\partial_tA(x,t)\cdot\nabla u+p(x,t)\partial_tu+\partial_tp(x,t)u&=\partial_tR(x,t)f(x).\end{align*}
Multiplying $2\partial_tu$ to the above equality and integrating over $\Omega$ yield
\begin{align*}&\int_\Omega A^0(x,t)\partial_t(|\partial_tu|^2)dx+\int_\Omega 2\partial_tA^0(x,t)|\partial_tu|^2dx+\int_\Omega A(x,t)\cdot\nabla(|\partial_tu|^2)dx\\
&\quad+\int_\Omega 2\partial_tu(\partial_tA(x,t)\cdot\nabla u)dx+\int_\Omega 2p(x,t)|\partial_tu|^2dx+\int_\Omega 2\partial_tp(x,t)u\partial_tudx\\
&=\int_\Omega 2\partial_tu \partial_tR(x,t)f(x)dx.\end{align*}
Integration by parts yields
\begin{align*}&\frac{d}{dt}\int_\Omega A^0(x,t)|\partial_tu|^2dx\\
&=-\int_\Omega (\partial_tA^0(x,t)+2p(x,t))|\partial_tu|^2dx+\int_\Omega(\nabla\cdot A(x,t))|\partial_tu|^2dx\\
&\quad-\int_\Omega 2\partial_tu(\partial_tA(x,t)\cdot\nabla u)dx-\int_\Omega 2\partial_tp(x,t)u\partial_tudx+\int_\Omega 2\partial_tu \partial_tRfdx\\
&\quad-\int_{\partial\Omega}(A(x,t)\cdot\nu)|\partial_tu|^2dS\\
&\le C\left(\int_\Omega A^0(x,t)|\partial_tu|^2dx+\int_\Omega |u|^2dx+\int_\Omega|\partial_tA(x,t)\cdot\nabla u|^2dx+\int_\Omega|\partial_tRf|^2dx\right)\\
&\quad-\int_{\partial\Omega}(A(x,t)\cdot\nu)|\partial_tu|^2dS.\end{align*}
Adding $\displaystyle\frac{d}{dt}\int_\Omega|u|^2dx$ to the both sides of the above estimate, we obtain
\begin{align}\label{Integration}&\frac{d}{dt}\left(\int_\Omega A^0(x,t)|\partial_tu|^2dx+\int_\Omega|u|^2dx\right)\\
&\le C\bigg(\int_\Omega A^0(x,t)|\partial_tu|^2dx+\int_\Omega |u|^2dx+\int_\Omega|\partial_tA(x,t)\cdot\nabla u|^2dx\notag\\
&\quad+\int_\Omega|\partial_tRf|^2dx\bigg)+\int_\Omega 2|u||\partial_tu|dx-\int_{\partial\Omega}(A(x,t)\cdot\nu)|\partial_tu|^2dS\notag\\
&\le C\bigg(\int_\Omega A^0(x,t)|\partial_tu|^2dx+\int_\Omega|u|^2dx+\int_\Omega|\partial_tA(x,t)\cdot\nabla u|^2dx\notag\\
&\quad+\int_\Omega|\partial_tRf|^2dx\bigg)-\int_{\partial\Omega}(A(x,t)\cdot\nu)|\partial_tu|^2dS,\notag\end{align}
which implies
\begin{align*}&\frac{d}{dt}\left(e^{-Ct}\int_\Omega \Big(A^0(x,t)|\partial_tu|^2+|u|^2\Big)dx\right)\\
&\le e^{-Ct}\left(C\int_\Omega\Big(|\partial_tA(x,t)\cdot\nabla u|^2+|\partial_tRf|^2\Big)dx-\int_{\partial\Omega}(A(x,t)\cdot\nu)|\partial_tu|^2dS\right).\end{align*}
Integrating over $(0,t)$ for $t\le T$ yields
\[E(t)\le C\left(E(0)+\int_{Q}|\partial_tA(x,t)\cdot\nabla u|^2dxdt+\int_\Omega|f|^2dx\right).\]
Since, using the equation \eqref{boundary}, we obtain
\begin{equation}\label{initial}E(0)\le C\int_\Omega|f|^2dx,\end{equation}
we prove \eqref{useless}.

Moreover, if we assume the assumption \eqref{spd}, then there exists $C>0$ such that for all $(x,t)\in \overline{Q}$,
\[|\partial_tA(x,t)\cdot\nabla u|^2\le C|A(x,t)\cdot\nabla u|^2.\]
Therefore, applying the above inequality to \eqref{Integration} and using the equation in \eqref{boundary} yield
\begin{align*}&\frac{d}{dt}\left(\int_\Omega A^0(x,t)|\partial_tu|^2dx+\int_\Omega |u|^2dx\right)\\
&\le C\left(\int_\Omega A^0(x,t)|\partial_tu|^2dx+\int_\Omega|u|^2dx+\int_\Omega|\partial_tRf|^2dx+\int_\Omega|Rf|^2dx\right)\\
&\quad-\int_{\partial\Omega}(A(x,t)\cdot\nu)|\partial_tu|^2dS,\end{align*}
which implies
\begin{align*}&\frac{d}{dt}\left(e^{-Ct}\int_\Omega \Big(A^0(x,t)|\partial_tu|^2+|u|^2\Big)dx\right)\\
&\le e^{-Ct}\left(C\int_\Omega\Big(\sum_{k=0}^1|\partial_t^kR|^2\Big)|f|^2dx-\int_{\partial\Omega}(A(x,t)\cdot\nu)|\partial_tu|^2dS\right).\end{align*}
Integrating over $(0,t)$ for $t\le T$ yields
\[E(t)\le C\left(E(0)+\int_\Omega|f|^2dx\right).\]
By \eqref{initial}, we complete the proof.
\end{proof}

\subsection{Proof of Lemma \ref{energy2}}

\begin{proof}[Proof of Lemma \ref{energy2}]
Differentiating the equation with respect to $t$ yields
\[A^0(x)\partial_t^2u+A(x)\cdot\nabla\partial_tu+p(x,t)\partial_tu+\partial_tp(x,t)u=\partial_tR(x,t)\cdot F(x).\]
Multiplying $2\partial_tu$ to the above equation and integrating over $\Omega$ yield
\begin{align*}\int_\Omega A^0(x)\partial_t(|\partial_tu|^2)dx+\int_\Omega A(x)\cdot\nabla(|\partial_tu|^2)dx&\\
+\int_\Omega 2p(x,t)|\partial_tu|^2dx+\int_\Omega 2\partial_tp(x,t)u\partial_tudx&=\int_\Omega 2\partial_tu \partial_tR(x,t)\cdot F(x)dx.\end{align*}
Integration by parts yields
\begin{align*}&\frac{d}{dt}\int_\Omega A^0(x)|\partial_tu|^2dx\\
&=\int_\Omega(\nabla\cdot A(x)-2p(x,t))|\partial_tu|^2dx-\int_\Omega 2\partial_tp(x,t)u\partial_tudx+\int_\Omega 2\partial_tu \partial_tR\cdot Fdx\\
&\quad-\int_{\partial\Omega}(A(x)\cdot\nu)|\partial_tu|^2dS\\
&\le C\left(\int_\Omega A^0(x)|\partial_tu|^2dx+\int_\Omega |u|^2dx+\int_\Omega|\partial_tR\cdot F|^2dx\right)-\int_{\partial\Omega}(A(x)\cdot\nu)|\partial_tu|^2dS.\end{align*}
Adding $\displaystyle\frac{d}{dt}\int_\Omega|u|^2dx$ to the both sides of the above estimate, we obtain
\begin{align*}&\frac{d}{dt}\left(\int_\Omega A^0(x)|\partial_tu|^2dx+\int_\Omega|u|^2dx\right)\\
&\le C\left(\int_\Omega A^0(x)|\partial_tu|^2dx+\int_\Omega |u|^2dx+\int_\Omega|\partial_tR\cdot F|^2dx\right)+\int_\Omega 2|u||\partial_tu|dx\\
&\quad-\int_{\partial\Omega}(A(x)\cdot\nu)|\partial_tu|^2dS\\
&\le C\left(\int_\Omega A^0(x)|\partial_tu|^2dx+\int_\Omega|u|^2dx+\int_\Omega|\partial_tR\cdot F|^2dx\right)-\int_{\partial\Omega}(A(x)\cdot\nu)|\partial_tu|^2dS,\end{align*}
which implies
\begin{align*}&\frac{d}{dt}\left(e^{-Ct}\int_\Omega \Big(A^0(x)|\partial_tu|^2+|u|^2\Big)dx\right)\\
&\le e^{-Ct}\left(C\int_\Omega|\partial_tR\cdot F|^2dx-\int_{\partial\Omega}(A(x)\cdot\nu)|\partial_tu|^2dS\right).\end{align*}
Integrating over $(0,t)$ for $t\le T$ yields
\[E(t)\le C\left(E(0)+\int_\Omega |F|^2dx\right).\]
Since, using the equation in \eqref{boundary3}, we obtain
\[E(0)\le C\int_{\Omega}|F|^2dx,\]
we prove \eqref{energy_estimate2}.
\end{proof}

\section*{Acknowledgment}
This work was supported in part by Grant-in-Aid for JSPS Fellows Grant Number JP20J11497, and Istituto Nazionale di Alta Matematica (IN$\delta$AM), through the GNAMPA Research Project 2020. Istituto Nazionale di Alta Matematica (IN$\delta$AM), through the GNAMPA Research Project 2020, titled ``Problemi inversi e di controllo per equazioni di evoluzione e loro applicazioni'', coordinated by the first author. Moreover, this research was performed in the framework of the French-German-Italian Laboratoire International Associ\'e (LIA), named COPDESC, on Applied Analysis, issued by CNRS, MPI, and IN$\delta$AM, during the IN$\delta$AM Intensive Period-2019, ``{\it Shape optimization, control and inverse problems for PDEs}'', held in Napoli in May-June-July 2019.

This paper owes much to the thoughtful and helpful comments of Professor Piermarco Cannarsa (University of Rome ``Tor Vergata'') and Professor Masahiro Yamamoto (The University of Tokyo). In particular, we thank Prof. Cannarsa to have suggested to us that the inequality \eqref{spd} imply a precise exponential structure for the coefficient $A(x,t)$ (that we showed in Proposition \ref{structure}). Moreover, we are extremely grateful to Prof. Yamamoto to have fully read a draft of this paper and to have given us a lot of pieces of advice to make it more clear and readable.

\bibliographystyle{plain}
\bibliography{first_hyperbolic_r1}

\end{document}